\newcommand{\mat}[1]{\boldsymbol{#1}}
\newcommand{\matc}[1]{\boldsymbol{\mathcal{#1}}}
\newcommand{\mathat}[1]{\boldsymbol{    \widehat{#1}}}
\newcommand{\mattilde}[1]{\widetilde{\boldsymbol{#1}}}
\newcommand{\mc}[1]{\mathcal{#1}}
\newcommand{\mb}[1]{\mathbb{#1}}
\newcommand{\bmat}[1]{\begin{bmatrix} #1
\end{bmatrix}}
\newcommand{\B}[1]{\boldsymbol{#1}}
\newcommand{\range}{\mathcal{R}\,}
\newtheorem{assumption}{Assumption}
\newcommand{\Ah}{\B{\widehat{A}}}
\newcommand{\Uh}{\B{\widehat{U}}}
\newcommand{\Vh}{\B{\widehat{V}}}
\newcommand{\Sh}{\B{\widehat{\Sigma}}}
\newcommand{\Hh}{\B{\widehat{\mc{H}}}}
\newcommand{\sv}{\mathsf{sv}\,}
\newcommand{\proj}[1]{\B{\mc{P}}_{#1}}
\newcommand{\rank}{\mathsf{rank}\,}
\title{Efficient Algorithms for Eigensystem Realization using Randomized SVD\thanks{Submitted to the editors on TBD.\funding{This work was funded, in part, through the National Science Foundation: R.M. was supported by DMS-1745654, A.K.S. was supported by DMS-1821149, J.K and A.C. were  supported by ECS-1711004. }}}
\author{Rachel Minster\thanks{Department of Mathematics, North Carolina State University (\email{rlminste@ncsu.edu}, \email{asaibab@ncsu.edu})} \and  Arvind K. Saibaba\footnotemark[2] \and Jishnudeep Kar\thanks{Department of Electrical and Computer Engineering,   North Carolina State University (\email{jkar@ncsu.edu}, \email{achakra2@ncsu.edu})} \and Aranya Chakrabortty\footnotemark[4]}
\begin{document}
\maketitle
\begin{abstract} Eigensystem Realization Algorithm (ERA) is a data-driven approach for subspace system identification and is widely used in many areas of engineering. However, the computational cost of the ERA is dominated by a step that involves the singular value decomposition (SVD) of a large, dense matrix with block Hankel structure. This paper develops computationally efficient algorithms for reducing the computational cost of the SVD step by using randomized subspace iteration and exploiting the block Hankel structure of the matrix. We provide a detailed analysis of the error in the identified system matrices and the computational cost of the proposed algorithms. We demonstrate the accuracy and computational benefits of our algorithms on two test problems: the first involves a partial differential equation that models the cooling of steel rails, and the second is an application from power systems engineering.  
\end{abstract}
\begin{keywords}
  Eigensystem Realization Algorithm, System Identification, Singular Value Decomposition, Randomized Algorithms.
\end{keywords}

\begin{AMS}
  93B30, 93B15, 65F15, 37M10, 15A18.
\end{AMS}

\section{Introduction}
{Linear Time Invariant (LTI) systems form one of the most important classes of dynamic systems existing in the physical world. LTI systems closely approximate the linear behavior of any nonlinear physical process around a desired operating point, and provide insights on how the system will behave in response to any small-signal change in its equilibrium state. The state-space representation of a LTI discrete-time system can be written as 
\begin{equation}\label{eqn:dynamics}
\begin{aligned}
\mat{x}_{k+1} &=\mat{Ax}_k+\mat{Bu}_k, \qquad \mat{x}_0 = \mat{x}(0) \\
\mat{y}_k &= \mat{Cx}_k+\mat{Du}_k,
\end{aligned}
\end{equation}
where $n$ is the order of the system, $k=0,\,1,\,2,\dots$ is the sampling index, $\mat{x}_k \in \mb{R}^{n}$ is the state vector, $\mat{u}_k \in \mb{R}^{m}$ is the input vector, and $\mat{y}_k \in \mb{R}^{\ell}$ is the output vector at sampling instant $k$, with $m$ and $\ell$ being the number of inputs and outputs, respectively. The matrix $\mat{A} \in \mb{R}^{n \times n}$ is referred to as the state matrix, $\mat{B} \in \mb{R}^{n \times m}$ is the input matrix, $\mat{C} \in \mb{R}^{\ell \times n}$ is the output matrix, and $\mat{D} \in \mb{R}^{\ell \times m}$ is the input-output feedthrough matrix. In the majority of practical applications, however, these four matrices may not be exactly known to the system modeler because of different kinds of model uncertainties and operational uncertainties. In that case, one must estimate these matrices from sampled measurements of the input sequence $\{\mat{u}_k\}$ and the output sequence $\{\mat{y}_k\}$. This process is referred to as system identification. Since the state may be represented in different coordinate frames without changing input-output characteristics, the more formal definition of system identification is given as the process of estimating $(\mat{A},\mat{B},\mat{C},\mat{D})$ up to a similarity transformation $(\mat{TAT}^{-1},\mat{TB},\mat{CT}^{-1},\mat{D})$, where $\mat{T}\in \mb{R}^{n\times n}$ is an invertible matrix. As mentioned, such a transformation does not change the input-output behavior or the transfer function of the system. We assume that the system to be identified is observable and reachable \cite{ljung}.  There exists a vast literature on system identification with a variety of numerical algorithms, developed for various applications such as electric power systems, process control, mechanical systems, aerospace applications, civil and architectural applications, and chemical and biological processes to name a few. For a survey of these methods, please see ~\cite{antoulas2005approximation,ljung,van2012subspace,verhaegen2007filtering}}. In this paper, we focus on identifying the system matrices when the inputs $\{\mat{u}_k\}$ are in the form of impulse functions.

One of the most common identification methods used for identifying LTI models in practice is known as the Eigensystem Realization Algorithm (ERA), initially proposed in \cite{kung1978new}, but can be found in many other sources such as \cite{verhaegen2007filtering}. ERA is a subset of a broader class of identification methods called subspace system identification, and is a purely data-driven approach consisting of two main steps. In the first step, this algorithm computes the singular value decomposition (SVD) of a block Hankel matrix, constructed using the impulse response data. In the second step, the truncated SVD is used to solve a least-squares optimization problem to identify the system matrices. The first step is computationally expensive since it has cubic complexity in the dimensions of the matrix. To reduce the computational cost, the tangential interpolation-based ERA (TERA)~\cite{kramer2016tangential} was proposed. This method reduces the number of inputs and outputs, thus reducing the dimensions of the matrix whose SVD needs to be computed. Another algorithm, known as CUR-ERA~\cite{kramer2018system}, uses the CUR decomposition to efficiently compute the low-rank decomposition of the block Hankel matrix. While these algorithms have successfully lowered the costs, important challenges remain.

\paragraph{Contributions and Content} We propose new randomized algorithms for tackling the computational costs of ERA. The first algorithm (Section~\ref{sec:randera}) accelerates the standard randomized SVD by exploiting the block Hankel structure to efficiently perform matrix-vector products. The second algorithm (Section~\ref{ssec:randtera}) employs similar ideas as the first algorithm in combination with the TERA. The resulting algorithms are efficient both in terms of storage costs and computational costs. In Section~\ref{sec:err}, we derive new error bounds that provide insight into the accuracy and stability of the system matrices identified using the approximation algorithms. The error analysis is not tied to any particular algorithm and in this sense is fairly general. Finally, we demonstrate the benefits of the proposed algorithms on two different applications: first, a heat transfer problem for cooling of steel rails, and second, a dynamic model of an electric power system with multiple generators and loads.

\paragraph{Comparison to related work} While TERA works with smaller matrices, it still has cubic complexity in the number of time measurements. This is computationally demanding when the dynamics of the system are slow, and many measurements in time are needed to fully resolve the dynamics. The CUR-ERA algorithm has similar storage and computational complexity as the algorithms we propose but numerical experiments (see Section~\ref{sec:num}) suggest that our algorithm is more accurate.  Furthermore, the randomized algorithms developed here can exploit parallelism in multiple ways; the matrix-vector products can be parallelized across multiple random vectors as well as over the input/output pairs. This makes it attractive for implementations on high performance computing platforms. If the target rank for the low-rank decomposition is not known in advance, one can use randomized range finding algorithms~\cite{martinsson2016randomized,yu2018efficient} to estimate the target rank.  Two other features make our contributions attractive: first, the block Hankel structure that we exploit here can be adapted to any other matrix-free low-rank approximation algorithm, for example, based on the Lanczos bidiagonalization~\cite{simon2000low}, and second, the error analysis developed here is informative for any approximation algorithm. There are other randomized algorithms for system identification~\cite{wu2016fast,yu2016computationally} but they tackle slightly different settings.

\section{Background}
In this section, we first review the Eigensystem Realization Algorithm (Section~\ref{sec:era}), the associated computational costs, and motivate the need for efficient algorithms.  We also review the key ingredients needed to construct our algorithms: algorithms for storing and computing with Hankel matrices (Section~\ref{ssec:hankel}), and randomized SVD (Section~\ref{ssec:randsvd}) for computing the low-rank factorizations.
\subsection{Eigensystem Realization Algorithm}\label{sec:era}
ERA was first proposed in \cite{kung1978new}, but we follow the formulation in \cite{kramer2016tangential}. Other references for the ERA include \cite{verhaegen2007filtering,van2012subspace}. Now, assuming the initial condition $\mat{x}_0 = \mat{0}\in \mb{R}^n$, and given a sequence of inputs $\mat{u}_i$ for $i= 1,\dots$, the outputs observed are 
\begin{equation}\label{eqn:obs}\mat{y}_k = \sum_{j=0}^{k-1} \mat{h}_{k-j-1}\mat{u}_{j} \quad k = 0,1,\dots,\end{equation} where the matrices $\{\mat{h}_k\}$ are called the \textit{Markov parameters} and are given by
\begin{equation}\label{eqn:markov}
    \mat{h}_k = \left\{ \begin{array}{ll}
       \mat{D} &  k = 0 \\
    \mat{CA}^{k-1}\mat{B} & k = 1,\dots,2s-1. 
    \end{array}\right.
\end{equation}

 The system is excited $m$ times using impulse input excitations. That is, we take 
\[\mat{u}_k^{(j)} = \left\{ \begin{array}{ll}\mat{e}_j & k =0 \\ \mat{0} & k > 0\end{array} \right. \qquad j =1,\dots,m. \] 
Here $\mat{e}_j$ is the $j$th column of the $m\times m$ identity matrix. The outputs from each impulse excitation can be used to construct the Markov parameters $\{ \mat{h}_k \}$. The ERA uses the Markov parameters to recover the system matrices. If data using impulse inputs is not available, then one can estimate the Markov parameters from the general input data~\cite{juang1993identification}.

These Markov parameters are first arranged to form the block Hankel matrix $\matc{H}_{s} \in \mb{R}^{(s\ell) \times (sm)}$ defined as 
\begin{equation}\label{eqn:H}
    \begin{aligned}
\boldsymbol{\mc{H}}_{s} &= \bmat{\mat{h}_1 & \mat{h}_2 & \dots & \mat{h}_s\\ \mat{h}_2 & \mat{h}_3 & \dots & \mat{h}_{s+1} \\ \vdots & \vdots & \iddots & \vdots \\ \mat{h}_s & \mat{h}_{s+1} & \dots & \mat{h}_{2s-1}} \\[8pt]
&= \bmat{\mat{CB} & \mat{CAB} & \dots & \mat{CA}^{s-1}\mat{B}  \\ \mat{CAB} & \mat{CA}^2\mat{B} & \dots & \mat{CA}^s\mat{B} \\ \vdots & \vdots & \iddots & \vdots  \\ \mat{CA}^{s-1}\mat{B} & \mat{CA}^s\mat{B} & \dots & \mat{CA}^{2s-2}\mat{B} }. 
\end{aligned}
\end{equation}
Here, $s$ is a chosen parameter that determines the number of block rows and columns of $\matc{H}_{s}$. It is known that this matrix can be factorized into the observability matrix $\matc{O}_s$ and the controllability matrix $\matc{C}_s$, as $\matc{H}_{s} = \matc{O}_s\matc{C}_s$, where 
\[ \matc{O}_s = \bmat{ \mat{C} \\ \mat{CA} \\ \vdots \\ \mat{CA}^{s-1}}, \qquad \matc{C}_s =\bmat{ \mat{B} & \mat{AB} &  \dots & \mat{A}^{s-1}\mat{B}}.\]
If the system is assumed to be reachable and observable, then $\range(\matc{H}_{s}) = \range(\matc{O}_s)$, where $\range(\cdot)$ denotes the range (or column space) of a matrix. We can obtain a basis for $\matc{O}_s$ using  the reduced SVD of $\matc{H}_{s} = \mat{U}_n\mat{\Sigma}_n\mat{V}_n^\top$. Then, partition the left singular vectors as 
\[ \mat{U}_n = \bmat{\mat\Upsilon_f \\ \mat{*}} = \bmat{\mat{*} \\ \mat\Upsilon_l},\]
where $\mat\Upsilon_f ,\mat\Upsilon_l\in \mb{R}^{(s-1)\ell\times n}$. We can obtain $\mat{A}$ using the formula $\mat{A} = \mat\Upsilon_f^\dagger \mat\Upsilon_l$, where $^\dagger$ denotes the Moore-Penrose pseudoinverse. Furthermore, we can obtain the output matrix $\mat{C}$ and the input matrix $\mat{B}$ using the formulas
\begin{equation}\label{eqn:BC} \mat{C} = \bmat{\mat{I}_{\ell} & \mat{0}} \mat{\Upsilon}_f, \qquad  \mat{B} = \mat{\Sigma}_n\mat{V}_n^\top\bmat{\mat{I}_{m} \\ \mat{0}}. \end{equation}
The matrix $\mat{D}$ can be identified as the Markov matrix $\mat{h}_0$ and, therefore, we will not discuss its estimation in future sections.

\paragraph{Reduced order model}In some applications, the goal is not only to identify the system but also obtain a reduced order model of the system. That is, we seek the reduced system matrices $\mat{A}_r \in \mb{R}^{r\times r}$, $\mat{B}_r \in \mb{R}^{r \times m}$, $\mat{C}_r\in \mb{R}^{\ell\times r}$ and $\mat{D}_r \in \mb{R}^{\ell \times m}$ which approximate the dynamics of the original system~\eqref{eqn:dynamics}.
To accomplish this, as before, we first compute a rank $r$ approximation to the matrix $\matc{H}_s$ as 
\[ \matc{H}_s \approx \mat{U}_r\mat{\Sigma}_r\mat{V}_r^\top,\]
where $r \leq n$. We partition the left singular vectors $\mat{U}_r$ as 
\[ \mat{U}_r = \bmat{\mat\Upsilon_f^{(r)} \\ \mat{*}} = \bmat{\mat{*} \\ \mat\Upsilon_l^{(r)}},\]
such that $\mat\Upsilon_f^{(r)} \in \mb{R}^{(s-1)\ell\times r}$ and $\mat\Upsilon_l^{(r)} \in \mb{R}^{(s-1)\ell \times r}$. Then, we compute the reduced order model $\mat{A}_r = \left[\mat\Upsilon_f^{(r)}\right]^\dagger \mat\Upsilon_l^{(r)}$ such it that minimizes the least squares problem 
\[ \min_{\mathat{A} \in \mb{R}^{r\times r}} \|\mat\Upsilon_f^{(r)}\mathat{A} - \mat\Upsilon_l^{(r)} \|. \] The reduced order output matrix $\mat{C}_r$ and the input matrix $\mat{B}_r$ are computed using the formulas
\begin{equation}\label{eqn:BCr} \mat{C}_r = \bmat{\mat{I}_{\ell} & \mat{0}} \mat{U}_r, \qquad  \mat{B}_r = \mat{\Sigma}_r\mat{V}_r^\top\bmat{\mat{I}_{m} \\ \mat{0}}. \end{equation}

Other papers, such as \cite{kramer2018system}, use a slightly different representation for the system matrix $\mat{A}_r$ than the one used in~\cite{kung1978new}. In this alternate representation, $\mat{A}_r'$ is obtained using $\mat{A}_r' = \mat{\Sigma}_r^{-1/2}\mat{\Upsilon}_f^\dagger \mat{\Upsilon}_l \mat{\Sigma}_r^{1/2}$. Then, $\mat{B}_r'$ and $\mat{C}_r'$ are obtained using $$\mat{C}'_r= \bmat{\mat{I}_{\ell} & \mat{0}} \mat{\Upsilon}_f \mat{\Sigma}_r^{1/2}, \quad \mat{B}_r' = \mat{\Sigma}_r^{1/2} \mat{V}_r^\top \bmat{\mat{I}_m \\ \mat{0}}.$$ 

Note that the two formulations are equivalent up to a similarity transformation, and this results in the same Markov parameters and input-output behavior of the system. If $r = n$, then the ERA determines the system matrices $(\B{A},\B{B},\B{C},\B{D})$ and there is no model reduction step. On the other hand, if the target rank $r < n$, then the reduced order model is guaranteed stability under the conditions of~\cite[Theorem 3]{kramer2018system}.

\paragraph{Computational Cost} We briefly review the range of possible parameters. \begin{enumerate}
    \item State space: In the power system applications, the dimension of the state space $n$ is around $10^2$, whereas in applications with partial differential equations this dimension can be large, i.e., $10^5-10^7$ (see for example,~\cite{ma2011reduced}).
    \item Input dimension: The number of inputs $m$ is in the range $1-10^2$. 
    \item Output dimension: The number of outputs $\ell$ is also in the range $1-10^{2}$. In certain applications, the number of outputs is also the dimension of the state space. 
    \item Sample size: When the dynamics of the system are slow, the number of samples $2s-1$ can be large, i.e. $10^2-10^5$.
\end{enumerate}
The dominant cost of ERA is the cost of storing and factorizing the matrix $\matc{H}_s$, which is of size $s\ell \times sm$. The cost of storing the matrix $\matc{H}_s$ is $\mc{O}(s^2\ell m)$, whereas the cost of computing the SVD is $\mc{O}(s^3\ell m \min\{\ell,m\})$. In~\cite{kramer2018system,kramer2016tangential}, the authors discuss applications in which the size of $\matc{H}_s$ is $O(10^4)$. However, for the higher end of the parameters listed above, the matrix $\matc{H}_s$ is of size ${O}(10^7)$. Forming and factorizing this matrix is completely infeasible. The algorithms we propose are both efficient in storage and computational cost and make ERA applicable to larger problem sizes. 

\subsection{Hankel matrices}\label{ssec:hankel}
Structured matrices such as circulant and Hankel matrices are computationally efficient to work with since matrix-vector products (matvecs) can be accelerated using the Fast Fourier Transform (FFT). We first review circulant matrices. Circulant matrices are completely determined by their first column $$ \mat{x} = \bmat{ x_1 & \dots & x_N}^\top,$$ and are diagonalized by the Fourier matrix. Let the circulant matrix $\mat{X}$ be defined as
\[\mat{X} = \bmat{x_1 & x_N & \dots & x_2& \\ x_2 & x_1 & \dots & x_3 \\ \vdots & \vdots & \ddots & \vdots \\ x_N & x_{N-1} & \dots & x_1 },\]
and let $\mat{F}_N$ be the $N\times N$ Fourier matrix with entries $(\mat{F}_N)_{jk} = e^{2\pi ijk/n}$. Then the eigenvalue decomposition of $\mat{X}$ is $\mat{X} = \mat{F}_N^*\text{diag}(\mathbf{F}_N\mat{x}) \mathbf{F}_N$, where $\mat{x} = \mat{X}(:,1)$ is the first column of $\mat{X}$. This means that the circulant matrix has eigenvalues $\mathbf{F}_N\mat{x}$. This result implies that matvecs $\mat{Xv}$ can be performed efficiently using FFTs as $\mat{y} = \text{IFFT}( \text{FFT}(\mat{v}) \odot \text{FFT}(\mat{x}))$, where $\odot$ is the elementwise product, FFT$(\cdot)$ and IFFT$(\cdot)$ denote the fast Fourier and inverse fast Fourier transforms respectively. The computational cost involves 2 FFTs and one inverse FFT, and can be implemented efficiently in $\mc{O}(N\log_2 N)$ floating point operations (flops)

Hankel matrices have constant entries along every anti-diagonal; that is, given the parameters $h_{1},h_2,\dots, h_{2s-1},$
the Hankel matrix is 
\[ \mat{H}_s = \bmat{h_1 & h_2 & \dots & h_s \\ h_2 & h_3 & \dots & h_{s+1} \\ \vdots & \vdots & \iddots & \vdots \\ h_s & h_{s+1} & \dots & h_{2s-1} }.\]
This corresponds to the single input/single output (SISO) case. Note that permuting a Hankel matrix with the reverse identity permutation matrix $\mat{J}_s$ results in a Toeplitz matrix (constant entries along every diagonal). That is, 
\[ \mat{J}_s = \bmat{ 0 & 0 & \dots & 0 &  1 \\0 & 0 & \dots & 1 & 0 \\\vdots & \vdots & \iddots & \vdots & \vdots \\ 0 &  1 & \dots & 0 & 0 \\  1 & 0 & \dots & 0 & 0  }, \qquad \mat{H}_s\mat{J}_s = \bmat{h_s & h_{s-1} & \dots & h_2 &  h_1 \\h_{s+1}  & h_s  & \dots & h_3 & h_2 \\\vdots & \vdots & \ddots & \vdots & \vdots \\ h_{2s-2} &  \vdots & \dots & h_s & h_{s-1}  \\  h_{2s-1} & h_{2s-2} & \dots & h_{s+1} & h_s }.\]
We can use this to our advantage while computing matrix-vector products with Hankel matrices. To compute the matrix-vector product $\mat{y} = \mat{H}_s\mat{v}$, we first write $\mat{y} = (\mat{H}_s\mat{J}_s)(\mat{J}_s\mat{v})$ so that 
\[ \mat{X}_{2s}\bmat{\mat{J}_s\mat{v} \\ \mat{0}_s} = \bmat{\mat{H}_s\mat{J}_s & \mat{B}_s \\ \mat{B}_s & \mat{H}_s\mat{J}_s} \bmat{\mat{J}_s\mat{v} \\ \mat{0}_s} = \bmat{\mat{y} \\ \mat{*}}, \]
where $\mat{*}$ denotes the part of a computation which we can ignore and 
\[ \mat{B}_s = \bmat{ 0 & h_{2s-1} & \dots  & h_{s+2} & h_{s+1} \\ h_1 & 0 & h_{2s-1} & \dots &  h_{s+2} \\ \vdots & \vdots & \ddots & \vdots & \vdots \\ h_{s-2} &  \vdots & \dots & 0 & h_{2s-1}  \\  h_{s-1} & h_{s-2} & \dots & h_{1} & 0 } .\]

This means that we can also efficiently compute matvecs with Hankel matrices by embedding it within a $2s \times 2s$ circulant matrix $\B{X}_{2s}$ defined by the vector 
\[ \mat{x}_{2s} = \bmat{ h_s & h_{s+1} & \dots & h_{2s-1} & 0 &  h_{1} & h_{2} & \dots & h_{s-1}}^\top.\]
Thus, only the first row and the last column need to be stored to compute matrix-vector products with the Hankel matrix $\mat{H}_s$. A summary of the algorithm to compute matvecs with $\mat{H}_s$ is given in Algorithm~\ref{alg:hankelmult2}.
\begin{algorithm}[!ht]
\begin{algorithmic}[1]
\REQUIRE last column  $\mat{h}_c \in \mb{R}^s$,  first row $\mat{h}_r \in \mb{R}^s$ of a Hankel matrix $\mat{H}_s$, vector $\mat{v} \in \mb{R}^s$
\ENSURE matvec $\mat{y} = \mat{H}_s\mat{v} \in \mb{R}^s$
\STATE Form circulant vector $\mat{x} = \bmat{\mat{h}_c^\top &0 & \mat{h}_r(1:\text{end}-1)}^\top$  
\STATE Pad the vector $\mat{v}$ to get $\mat{\hat{v}} = \bmat{\mat{v} & \mat{0}_{s}}^\top$
\STATE Take $\mat{z} = \text{IFFT}(\text{FFT}(\mat{x}) \odot \text{FFT}(\mat{\hat{v}}))$. 
\STATE Extract $\mat{y} = \mat{z}(1:s)$
\end{algorithmic}
\caption{$\mat{y} = $ Hankel-matvec$(\mat{h}_c,\mat{h}_r,\mat{v})$}
\label{alg:hankelmult2}
\end{algorithm}

\subsection{Randomized SVD}\label{ssec:randsvd}
We can efficiently compute a rank $r$ approximation of an $M \times N$ matrix $\mat{X}$ using a randomized version of the SVD~\cite{halko2011finding} (henceforth called RandSVD). The idea is to find a matrix $\mat{Q}$ that approximates the range of $\mat{X}$.  This is done by first drawing a Gaussian random matrix $\mat{\Omega} \in \mb{R}^{N \times (r+\rho)}$, where $r$ is the desired target rank, and $\rho \geq 0$ is an oversampling parameter (typically $\rho \leq 20$).  Then, the matrix $\mat{Y} = \mat{X\Omega}$ consists of random linear combinations of the columns of $\mat{X}$.  This means that we can get a matrix $\mat{Q}$ such that $\range(\mat{X}) \approx \range(\mat{Q})$ by taking a thin QR factorization $\mat{Y} = \mat{QR}$.  If $\mat{X}$ has singular values that decay rapidly, or $\rank(\mat{X})$ is exactly $r$, then $\range(\mat{Q})$ is a good approximation for $\range(\mat{X})$.  We can then approximate $\mat{X}$ by the low-rank representation $\mat{X} \approx \mat{Q}\mat{Q}^\top \mat{X}$; this can then be converted into the appropriate SVD format. The procedure is summarized in Algorithm~\ref{alg:randsvd}.

\begin{algorithm}[H]
\begin{algorithmic}[1]
\REQUIRE matrix $\mat{X} \in \mb{R}^{M \times N}$ with target rank $r$, oversampling parameter $\rho$ such that $r+\rho \leq \min\{M,N\}$, and number of subspace iterations $q\geq0$ \\
\ENSURE $\mathat{U} \in \mb{R}^{M \times r}$, $\mathat{\Sigma} \in \mb{R}^{r \times r}$, and $\mathat{V} \in \mb{R}^{N \times r}$ 
\STATE Draw Gaussian matrix $\mat{\Omega} \in \mb{R}^{N \times (r+\rho)}$ 
\STATE Multiply $\mat{Y} = (\mat{XX}^\top)^q\mat{X}\mat{\Omega}$
\STATE Thin QR factorization $\mat{Y} = \mat{Q}\mat{R}$
\STATE Form $\mat{B} = \mat{Q}^\top \mat{X}$
\STATE Calculate SVD $\mat{B} = \mat{U}_{\mat{B}} \mat{\Sigma} \mat{V}^\top$
\STATE Set $\mathat{U} = \mat{Q}\mat{U}_{\mat{B}}(:,1:r)$, $\mathat{\Sigma} = \mat{\Sigma}(1:r,1:r)$, and $\mathat{V} = \mat{V}(:,1:r)$. 

\end{algorithmic}
\caption{$[\mathat{U},\mathat{\Sigma},\mathat{V}]$=RandSVD$(\mat{X},r,\rho)$}
\label{alg:randsvd}
\end{algorithm}

RandSVD is computationally beneficial compared to the full SVD. In this paper, we use a variation of the RandSVD that uses $q\geq 0$ steps of the subspace iteration~\cite[Algorithm 4.4]{halko2011finding}. Note that for numerical stability,  we perform orthogonalization during and in between the subspace iterations. Assuming $M\geq N$, the cost of the full SVD is $\mc{O}(MN^2)$. On the other hand, if the cost of a matrix-vector product with $\mat{X}$ (or its transpose) is $T_{\mat{X}}$, then the cost of the randomized SVD can be expressed as 
\begin{equation}\label{eqn:randcost}
\text{Cost} = (2q+1)(r+\rho)T_{\mat{X}} + \mc{O}(r^2(M+N)).\end{equation}
We will use RandSVD in different ways in the system identification algorithms that we derive. We have found RandSVD with $q=0-2$ subspace iterations to be computationally efficient and sufficiently accurate for our purpose; however, there are several new randomized algorithms developed that have been reviewed in the recent paper~\cite{martinsson2020randomized}.

\section{Randomized algorithms for Eigensystem Realization}
In this section, we derive two randomized algorithms for efficient computation of the system matrices. The first algorithm accelerates the standard randomized SVD using the block Hankel structure of $\matc{H}_s$ (Section~\ref{sec:randera}); the second algorithm is a randomized variant of the Tangential Interpolation-based ERA (TERA) and is applicable when the number of inputs and outputs are large. 
\subsection{Randomized Eigensystem Realization Algorithm}\label{sec:randera}
Our first approach accelerates the computation of the system matrices by combining two ingredients: first, we replace a reduced SVD of $\matc{H}_{s}$ by a RandSVD to obtain an approximate basis for $\matc{O}_s$; second, we additionally exploit the block Hankel structure of $\matc{H}_{s}$ to accelerate the matvecs involving $\matc{H}_{s}$ and $\matc{H}_{s}^\top$ in the RandSVD. As we will show, each of these steps decreases the computational complexity yielding an efficient algorithm overall. 

\subsubsection{Block Hankel Matrices} \label{sssec:blockHankel}
We first explain how we exploit the block Hankel structure of the matrix $\matc{H}_{s}$ defined in~\eqref{eqn:H}. The multiplication process for Hankel matrices can be extended to block Hankel matrices. Suppose we have to compute $\mat{y} = \matc{H}_s\mat{x}$ for a given vector $\mat{x}$. Let us define the index sets 
\[\begin{aligned}
\mc{I}_i = &\>\{i,i+\ell, \dots, i+ (s-1)\ell\} \qquad & i = 1,\dots,\ell\\
\mc{J}_j = & \>\{j,j+m, \dots, j+ (s-1)m\} \qquad & j = 1,\dots,m.
\end{aligned}
\]
If we denote $\matc{H}_s(\mc{I}_i,\mc{J}_j)$ as the $s\times s$ submatrix obtained by extracting the rows and the columns defined by the appropriate index sets, then it is clear that  $\matc{H}_s(\mc{I}_i,\mc{J}_j)$ is a Hankel matrix, and that 
\[ \mat{y}(\mc{I}_i) = \sum_{j=1}^m \matc{H}_s(\mc{I}_i,\mc{J}_j)\mat{x}(\mc{J}_j), \qquad i = 1,\dots,\ell,\]
where $\mat{y}(\mc{I}_i)$ and $\mat{x}(\mc{J}_j)$ are the $s\times 1$ vectors obtained from $\mat{y}$ and $\mat{x}$ respectively.

Algorithm~\ref{alg:blockmult} gives the details of the procedure described here in MATLAB-like notation. It is important to note the following points. First, we need not actually form either the full block Hankel matrix or the intermediate Hankel matrices for each block element. Since the Hankel matrices are defined by the first row and the last column, we extract these quantities from the blocks $\{\mat{h}_k \}_{k=1}^{2s-1}$ as and when required. Second, since each Hankel matvec costs $\mc{O}(s\log_2 s)$, the overall cost of one matvec is $\mc{O}(\ell m s \log_2 s)$, compared to $\mc{O}(\ell m s^2)$ using the na\"\i ve approach. Finally, we can easily adapt this algorithm to compute $\matc{H}_s^\top \mat{x}$ as well; the main difference involves taking as inputs the transpose of the Markov parameters $\{\mat{h}_k^\top\}_{j=1}^{2s-1}$ instead. 

\begin{algorithm}[!ht]
\begin{algorithmic}[1]
\REQUIRE  blocks $\{\mat{h}_k\}_{k=1}^{2s-1}$ of Hankel matrix $\boldsymbol{\mc{H}} \in \mb{R}^{s\ell \times sm}$, vector $\mat{x} \in \mb{R}^{sm}$, dimension $s \geq 1$ 
\ENSURE matvec $\mat{y} = \matc{H}_s\mat{x}$
\FOR {$i = 1:\ell$}
\FOR {$j = 1:m$}
\STATE Extract first row $\mat{j}_r$ and last column $\mat{j}_c$ as $$\mat{j}_r = \bmat{\mat{h}_1(i,j) & \mat{h}_2(i,j) & \cdots & \mat{h}_s(i,j)}$$ $$\mat{j}_c = \bmat{\mat{h}_s(i,j) & \mat{h}_{s+1}(i,j) & \cdots & \mat{h}_{2s-1}(i,j)}^\top$$
\STATE Compute $\mat{\hat{y}} = $ Hankel-matvec$(\mat{j}_c,\mat{j}_r,\mat{x}(j:m:$end))
\STATE Compute $\mat{y}(i:\ell:\text{end}) =  \mat{y}(i:\ell:\text{end}) + \mat{\hat{y}}(1:s)$ 
\ENDFOR
\ENDFOR
\end{algorithmic}
\caption{$\mat{y} =$ Block-Hankel-matvec($\{\mat{h}_k\}_{k=1}^{2s-1},\mat{x},s)$}
\label{alg:blockmult}
\end{algorithm}

\subsubsection{Randomized ERA}
We now incorporate the block Hankel multiplication algorithm, Algorithm~\ref{alg:blockmult}, with RandSVD in order to accelerate the system identification process. This is simple to do; every time we need to multiply $\matc{H}_{s}$ or $\matc{H}^\top_{s}$, we use block Hankel multiplication instead. This is beneficial in two ways: to reduce the computational cost by two orders of magnitude (one from RandSVD and one from using block Hankel structure), and to reduce storage. We need not store $\matc{H}_{s}$ explicitly or even form the full matrix to begin with. All we need are the blocks that make up $\matc{H}_{s}$. This is a major benefit over the standard algorithms. 

 First, we use the RandSVD algorithm to compute a low-rank approximation of $\matc{H}_s$ with target rank $r \leq n$ to obtain $\mathat{U}_r\mathat{\Sigma}_r\mathat{V}_r^\top$. As mentioned earlier, the matvecs involving  $\matc{H}_{s}$ or $\matc{H}_{s}^\top$ are handled using Algorithm~\ref{alg:blockmult}. Then, in the system identification phase, we partition the left singular vectors as
$$\mathat{U}_r = \bmat{\mathat{\Upsilon}_f \\ *} = \bmat{* \\ \mathat{\Upsilon}_l},$$
where $\mathat{\Upsilon}_f$ and $\mathat{\Upsilon}_l$ are both $(s-1)\ell \times r$. The system matrices can then be recovered as $\mathat{A}_r = \mathat{\Upsilon}_f^\dagger \mathat{\Upsilon}_l$, 
$$\mathat{C}_r = \bmat{\mat{I}_\ell & \mat{0}}\mathat{U}_r, \qquad \mathat{B}_r = \mathat{\Sigma}_r\mathat{V}_r^\top \bmat{\mat{I}_m \\ \mat{0}}.$$
As before, the matrix $\B{D}_r$ is simply the first Markov parameter $\mat{h}_0$. We will refer to this randomized ERA that uses block Hankel multiplication as RandSVD-H. Now, we review the computational cost of this algorithm and in Section~\ref{sec:era}, we derive error bounds for the recovered system matrices.
\paragraph{Computational cost}
We now examine the computational cost of the three system identification algorithms described so far, namely the ERA using a full SVD, ERA using RandSVD, and RandSVD-H. The dominant cost of each algorithm is the SVD step, so we focus our attention there. The complexity of the SVD step for these algorithms is shown in Table~\ref{tab:cost}. Recall that the size of the matrix we are computing with is $s\ell \times sm$, the target rank for each RandSVD is $r$, and the size of the system is $n$. The cost of the full SVD is then $\mc{O}(s^3m\ell \min\{\ell,m\})$. To analyze the RandSVD based algorithms, we follow the analysis of cost in \eqref{eqn:randcost}. For a standard RandSVD, the cost of a matvec is $\mc{O}(s^2\ell m)$. If the matrix $\matc{H}_s$ is stored explicitly as we do in our implementation, then the storage cost is $\mc{O}(ms^2\ell)$; however, RandSVD can be implemented without storing $\matc{H}_s$ explicitly, in which case the cost of storage is $\mc{O}(ms\ell)$. When we use block Hankel structure to accelerate the RandSVD, the cost of a matvec is reduced to $\mc{O}(\ell m s \log_2 s)$ and the storage cost is also $\mc{O}(ms\ell)$. This shows that, as anticipated, replacing the SVD with a RandSVD reduces the cost, and then exploiting the block Hankel structure reduces the cost even further. We also include, for comparison purposes, the computational and storage costs of CUR-ERA \cite{kramer2018system}.

\begin{table}[!ht]
\centering
\begin{tabular}{c | c | c}
System ID Algorithm & Computational Cost & Storage cost \\
\hline
Full SVD & $\mc{O}(s^3m\ell \min\{\ell,m\})$ & $\mc{O}(s^2m\ell)$ \\
RandSVD  &  $\mc{O}(rs^2 \ell m +r^2s(\ell + m)$ & $\mc{O}(s^2m\ell)$ \\ 
RandSVD-H & $\mc{O}( r \ell ms\log_2s  + r^2s(\ell+m))$ & $\mc{O}(sm\ell)$\\ 
CUR-ERA & $\mc{O}(\kappa r^3+r^2sc(\ell+m))$ & $\mc{O}(sm\ell)$
\end{tabular}
\caption{Computational complexity of the dominant step, the SVD step, in each of the four algorithms.}
\label{tab:cost}
\end{table} 

\paragraph{CUR-ERA} We now include a brief description of CUR-ERA. The CUR-ERA algorithm relies on the principle of finding a maximum volume sub-matrix for computing a low-rank approximation; that is, a sub-matrix of specified dimensions with the largest determinant in absolute magnitude. Finding the maximum volume sub-matrix is a combinatorial optimization problem and, hence, one has to settle for heuristics to compute a nearly maximum volume sub-matrix in a reasonable computational time. The two approximations CUR-ERA uses are finding the cross approximation of a matrix, and finding a dominant volume submatrix. Note that in the computational cost in Table~\ref{tab:compcost}, $\kappa$ and $c$ are the number of iterations used in the cross approximation and the dominant volume submatrix, respectively. These values can be large in practice, meaning that our algorithms have a comparable computational cost. Also, the randomized SVD algorithm does not involve a combinatorial optimization problem, is known to be computationally efficient and accurate for a range of problems, and has well-developed error analysis. This makes the RandSVD-H beneficial in practical applications.  In addition, as we will show in numerical experiments in Section~\ref{sec:num}, our algorithms are more accurate.

\subsection{Randomized TERA}\label{ssec:randtera}
This approach is inspired by the tangential interpolation approach for model reduction. The goal of TERA is to reduce the dimension of the Markov parameters by projecting the parameters into a lower dimensional space. This reduces the size of the block Hankel matrix but preserves the block Hankel structure, therefore, reducing the computational cost of the SVD step~\cite{kramer2016tangential}. We briefly review the TERA approach and describe our acceleration using RandSVD.

\paragraph{TERA} In this approach, we seek two orthogonal projection matrices
\[ \begin{aligned}
    \mat{P}_1 = \mat{W}_1\mat{W}_1^\top &\qquad  \rank(\mat{W}_1) = \ell' \\
    \mat{P}_2 = \mat{W}_2\mat{W}_2^\top & \qquad \rank(\mat{W}_1) = m',
\end{aligned} \]
and the matrices $\mat{W}_1$ and $\mat{W}_2$ have orthonormal columns. To compute $\mat{P}_1$, we first arrange the Markov parameters in the matrix 
\begin{equation}\label{eqn:Hw} \matc{H}_w = \bmat{\mat{h}_1 & \dots & \mat{h}_{2s-1}} \in \mb{R}^{\ell \times m(2s-1)}.\end{equation}
We then solve the optimization problem $$\min_{\rank(\mat{P}) = \ell'} \|\mat{P}\matc{H}_w - \matc{H}_w \|_F^2 .$$ The optimal solution can be constructed using the SVD of $\matc{H}_w = \mat{U}_w\mat\Sigma_w\mat{V}_w^\top$. We take $\mat{W}_1 = \mat{U}_w(:,1:\ell')$; that is, we take $\mat{W}_1$ to be the first $\ell'$ left singular vectors of $\matc{H}_w$. Similarly, to construct the matrix $\mat{W}_2$, we first arrange the Markov parameters into the matrix 
\begin{equation}\label{eqn:He}\matc{H}_e = \bmat{\mat{h}_1 \\ \vdots \\ \mat{h}_{2s-1}} \in \mb{R}^{\ell (2s-1) \times m}. \end{equation}
We then solve the optimization problem $$\min_{\rank(\mat{P}) = m'} \|\matc{H}_e - \matc{H}_e\mat{P} \|_F^2 .$$ The optimal solution can be constructed using the SVD of $\matc{H}_e = \mat{U}_e\mat\Sigma_e\mat{V}_e^\top$. We take $\mat{W}_2 = \mat{V}_e(:,1:m')$; that is, we take $\mat{W}_2$ to be the first $m'$ right singular vectors of $\matc{H}_e$. Having obtained the matrices $\mat{W}_1$ and $\mat{W}_2$, we construct the projected Markov parameters as 
\[ \widetilde{\mat{h}}_i = \mat{W}_1^\top \mat{h}_i \mat{W}_2  \in \mb{R}^{\ell' \times m'}, \qquad i = 1,\dots,2s-1. \]
The block Hankel matrix $\matc{H}_{s}$ is ``projected'' using the matrices $\mat{W}_1$ and $\mat{W}_2$ arranged in diagonal blocks to obtain the reduced-size block Hankel matrix 
\begin{equation}
    \widetilde{\matc{H}}_{s} =  \bmat{\B{W}_1^\top \\ & \B{W}_1^\top \\ & & \ddots \\ & && \B{W}_1^\top  }\bmat{\mat{h}_1 & \mat{h}_2 & \dots & \mat{h}_s\\ \mat{h}_2 & \mat{h}_3 & \dots & \mat{h}_{s+1} \\ \vdots & \vdots & \iddots & \vdots \\ \mat{h}_s & \mat{h}_{s+1} & \dots & \mat{h}_{2s-1}} \bmat{\B{W}_2 \\ & \B{W}_2\\ & & \ddots \\ & && \B{W}_2  } .
    \end{equation}
It is important to observe that due to this projection, the block Hankel structure is preserved, and we can express $\widetilde{\matc{H}}_{s}$ in terms of the ``projected'' Markov parameters as 
 \begin{equation}
  \widetilde{\matc{H}}_{s}  = \bmat{\mattilde{h}_1 & \mattilde{h}_2 & \dots & \mattilde{h}_s   \\ \mattilde{h}_2 & \mattilde{h}_3 & \dots & \mattilde{h}_{s+1} \\ \vdots & \vdots & \iddots & \vdots \\ \mattilde{h}_s & \mattilde{h}_{s+1} & \dots & \mattilde{h}_{2s-1}} \in \mb{R}^{(\ell's)\times (m't)}.
\end{equation}
If $\ell' \ll \ell$ and $m' \ll m$, then the size of the matrix $\widetilde{\matc{H}}_{s}$ is much less than $\matc{H}_{s}$. The dimensions $\ell'$ and $m'$ are determined by the singular value decay of the matrices $\matc{H}_w$ and $\matc{H}_e$. Retaining a larger number of singular vectors (that is, large $\ell'$ and $m'$) results in a more accurate approximation to $\matc{H}_{s}$ but results in a larger matrix $\widetilde{\matc{H}}_{s}$ and in a higher computational cost.

\paragraph{Recovering system matrices} The next steps mimic the standard ERA approach to reconstruct the system matrices but we must carefully account for the dimensions of the projected system. We compute the SVD of $\widetilde{\matc{H}}_{s} = \mattilde{U}_r\mattilde{\Sigma}_r\mattilde{V}_r^\top$, and partition the left singular vectors as 
\[\mattilde{U}_r=\bmat{\mattilde{\Upsilon}_f \\ \mat{*}} = \bmat{\mat{*} \\ \mattilde{\Upsilon}_l}.\]
The system matrices can be recovered as 
\begin{equation} \label{eqn:tera} \mattilde{A}_r = \>\mattilde{\Upsilon}_f^\dagger \mattilde{\Upsilon}_l \in \mb{R}^{r \times r}, \qquad  \mattilde{C}_r = \>  \bmat{\mat{W}_1 & \mat{0}} \mattilde{\Upsilon}_f, \qquad   \mattilde{B}_r = \mattilde{\Sigma}_r\mattilde{V}_r^\top\bmat{\mat{W}_2^\top\\ \mat{0}} .  \end{equation}
\paragraph{Computational cost} To motivate the need for accelerating TERA using RandSVD, we first review the computational cost which has three components. Computing the projection matrices $\mat{W}_1$ and $\mat{W}_2$ involves a computational cost of $\mc{O}\left(s(\ell m^2 + m\ell^2) \right)$. The SVD of $\widetilde{\matc{H}}_{s}$ now costs 
\[\mc{O}( s^3\ell'm' \min\{\ell',m'\}),\]
and, similar to earlier algorithms, the cost recovery of the system matrices is negligible compared to the cost of the SVD. Although the computational cost is significantly reduced (assuming $\ell' \ll \ell$ and $m' \ll m$), the cost of the SVD still dominates the computational cost and has cubic scaling with $s$. We now propose a new algorithm to lower this cost. 

\begin{algorithm}[!ht]
\begin{algorithmic}[1]
\REQUIRE  blocks $\{\mat{h}_k\}_{k=1}^{2s-1}$ of Hankel matrix $\boldsymbol{\mc{H}} \in \mb{R}^{s\ell \times sm}$, target rank $1\leq r \leq n$, integers $\ell' \leq \ell$, $m'\leq m$.
\STATE Form $\matc{H}_w$ using~\eqref{eqn:Hw} and compute its SVD $\mat{U}_w\mat\Sigma_w \mat{V}_w^\top$. Set $\mat{W}_1 = \mat{U}_w(:,1:\ell')$
\STATE Form $\matc{H}_e$ using~\eqref{eqn:He} and compute its SVD $\mat{U}_e\mat\Sigma_e \mat{V}_e^\top$. Set $\mat{W}_2 = \mat{V}_e(:,1:m')$
\STATE Form $\mathat{h}_i = \mat{W}_1^\top\mat{h}_i\mat{W}_2$ for $i=1,\dots,2s-1$.
\STATE Compute $[\mattilde{U}_r,\mattilde{\Sigma}_r,\mattilde{V}_r]$=RandSVD$(\widetilde{\matc{H}}_s,r,\rho)$. \COMMENT{Matvecs are computed using Algorithm~\ref{alg:blockmult}}
\STATE Compute system matrices using \eqref{eqn:tera}
\end{algorithmic}
\caption{ RandTERA}
\label{alg:randtera}
\end{algorithm}

\paragraph{RandTERA} It is worth pointing out that the ``projected'' Hankel matrix $\widetilde{\matc{H}}_{s}$ still retains its block Hankel structure. To reduce the computational cost, we combine the following ingredients that were used previously: we use the RandSVD to compute the rank$-r$ approximation to $\widetilde{\matc{H}}_{s}$, the matvecs involving $\widetilde{\matc{H}}_{s}$ can be accelerated using the block Hankel structure (as described in Section~\ref{sssec:blockHankel}). The main difference is that we use the ``projected'' Markov parameters $\{\mattilde{h}_i\}_{i=1}^{2s-1}$ rather than the Markov parameters $\{\mat{h}_i\}_{i=1}^{2s-1}$. As a result, the computational cost of the SVD step is reduced to 
\[\mc{O}\left( r\ell'm' s\log_2s  + r^2s(m' + \ell') \right).\]
We call this algorithm RandTERA, and a complete description of this algorithm is given in Algorithm~\ref{alg:randtera}. A breakdown of the computational cost of the RandTERA is given below in Table~\ref{tab:compcost}.

\begin{table}[!ht]
\begin{center}
\begin{tabular}{c | c }
Stage & Computational Cost  \\
\hline
Computing $\B{W}_1,\B{W}_2$ & $\mc{O}\left(s(\ell m^2 + m\ell^2) \right)$\\
SVD step & $\mc{O}\left( r\ell'm' s\log_2s  + r^2s(m' + \ell') \right)$  
\end{tabular}
\end{center}
\caption{Computational cost of computing RandTERA}
\label{tab:compcost}
\end{table}
Regarding the storage cost, since only the projected Markov parameters need to be stored, the storage cost is $\mc{O}(sm'\ell')$, which is potentially lower than RandSVD-H which requires $\mc{O}(sm\ell)$.

\section{Error Analysis}\label{sec:err}

In this section, we analyze the accuracy of RandERA in Section~\ref{sec:randera}, and derive bounds on the accuracy of the recovered system matrix $\mat{A}_r$. Our derivation is not tied to the randomized algorithms used to compute the approximations; this makes the analysis applicable to more general settings. 

\subsection{Background and assumptions}\label{ssec:assump} We first review the necessary background information and clearly state our assumptions needed for the analysis. 
\paragraph{Canonical Angles} Given $\matc{H}_s = \mat{U\Sigma V}^\top$, partition $\mat{U}_r = \mat{U}(:,1:r)$, the left singular vectors corresponding to the top $r$ singular values of $\matc{H}_s$, as
\[ \B{U}_r = \bmat{\B{\Upsilon}_f\\ \mat{*}} = \bmat{\mat{*} \\\B{\Upsilon}_l} \in \mb{R}^{s\ell \times r}.\]
Note that we have dropped the superscripts $(r)$, compared to Section~\ref{sec:era}, to make the notation manageable. We can recover the matrix $\B{A}_r$ as  $\B{A}_r = \B{\Upsilon}_f^\dagger\B{\Upsilon}_l$. Similarly, let $\Hh_s = \mathat{U}\mathat{\Sigma}\mathat{V}^\top$ be an approximation to $\B{\mc{H}}_s$ with left singular vectors $\mathat{U}_r$ partitioned as
\[ \Uh_r = \bmat{\mathat{\Upsilon}_f\\ \mat{*}} = \bmat{\mat{*} \\\mathat{\Upsilon}_l} \in \mb{R}^{s\ell \times r}, \]
and we can compute the approximate system matrix $\mathat{A}_r = \mathat{\Upsilon}_f^\dagger \mathat{\Upsilon}_l \in \mb{R}^{r\times r}$. Let the canonical angles between the subspaces $\range (\B{U}_r)$ and $\range (\mathat{U}_r)$ be denoted by
\[ 0 \leq \theta_1 \leq \dots \leq \theta_{max} \leq \pi/2. \]
If we collect the angles into a diagonal matrix $\B\Theta$, then  $\B{U}_r^\top\Uh_r$ has the SVD 
\[ \B{U}^\top_r\Uh_r = \B{P}(\cos\B\Theta)\B{Q}^\top.\]
Let $\proj{\B{U}_r}$ denote the orthogonal projector onto $\range(\B{U}_r)$; similarly, let $\proj{\Uh_r}$ denote the orthogonal projector onto $\range(\Uh_r)$. Then, the distance between the two subspace $\range(\B{U}_r)$ and $\range(\Uh_r)$ is given by 
\[\|\proj{\B{U}_r} - \proj{\Uh_r}\|_2 = \|(\B{I}-\proj{\B{U}_r})\proj{\Uh_r}\|_2 = \|\sin\B\Theta\|_2 =  \sin\theta_{\max}. \]
See \cite[Chapter II.4]{stewart1990matrix} for more details on canonical angles between subspaces.

\paragraph{Pseudoinverses} We recall some facts about the perturbation of pseudoinverses~\cite[Section 2.2.2]{bjorck2015numerical}. If $\B{M},\B{E} \in \mb{R}^{n\times r}$ such that $\rank(\B{M}) = \rank(\B{M}+\B{E}) = r$ then
\begin{equation}\label{eqn:mppert}
    \|(\B{M} + \B{E})^\dagger - \B{M}^\dagger\|_2 \leq \sqrt{2}\|\B{M}^\dagger\|_2\| (\B{M}+\B{E})^\dagger\|_2 \|\B{E}\|_2
\end{equation}
Furthermore, if $\|\B{E}\|_2 \|\B{M}^\dagger\|_2 < 1$ then
\begin{equation}\label{eqn:mpinv} \| (\B{M}+\B{E})^\dagger\|_2 \leq  \frac{\|\B{M}^\dagger\|_2}{1-\|\B{E}\|_2\|\B{M}^\dagger\|_2}. \end{equation}
If $\B{A}\in \mb{R}^{m\times r}$ and $\B{B}\in \mb{R}^{r\times n}$ such that $\rank(\B{A}) = \rank(\B{B}) = r$, then by~\cite[Theorem 2.2.3]{bjorck2015numerical}
\begin{equation}\label{eqn:abdagger} (\B{AB})^\dagger = \B{B}^\dagger \B{A}^\dagger.\end{equation}

\paragraph{Accuracy of eigenvalues} A norm based approach, i.e., $\|\mat{A}-\Ah_r\|$, is not meaningful since  $\mat{A}_r$ (and $\Ah_r$) can only be determined up to a similarity transformation. To compare the approximate system matrix $\Ah_r$ with $\mat{A}_r$, we compare the eigenvalues of these two matrices since the eigenvalues remain unchanged by a similarity transformation.  We measure the accuracy of the eigenvalues of $\Ah$ using the spectral variation, which we now define. Let $\B{A},\B{B}\in\mb{R}^{n\times n}$; the spectral variation between $\B{A}$ and $\B{B}$ is defined as~\cite[Section VI.3]{bhatia2013matrix}
\begin{equation}\label{eqn:spvar}
\sv(\psi(\B{B}),\psi(\B{A})) = \max_{1 \leq j \leq n} \min_{ 1\leq i \leq n} |\lambda_i(\B{A}) -\lambda_j(\B{B})|,
\end{equation}
where $\psi(\cdot)$ denotes the spectrum of the matrix. 

We briefly list the various assumptions that are required for our main result.
\begin{assumption}\label{ass:main}  We assume:
\begin{enumerate}
\item  [A1.] System matrix:  Assume that $\B{A}_r = \B{\Upsilon}_f^\dagger\B{\Upsilon}_l \in \mb{R}^{r\times r}$ is diagonalizable and let $\B{W} \in \mb{R}^{r\times r}$ be the matrix of eigenvectors.
\item [A2.] Singular vectors: Assume that the subblocks $\B{\Upsilon}_f$ of the singular vectors $\B{U}_r$ are full rank; that is  
$\rank(\B{\Upsilon}_f) =  r$. 
\item [A3.]
Markov parameters: Assume the Markov parameters converge to $\B{h}_i \rightarrow \B{0}$ for $i > s$. 
\item  [A4.] Canonical angles: Assume that the canonical angles are sufficiently small and satisfy
\begin{equation}\label{eqn:eta}
    \eta \equiv 2\sin\theta_{\max} \|\B{\Upsilon}_f^\dagger\|_2 < 1.
\end{equation} 
\end{enumerate}

\end{assumption}

Assumption A1 is rather strong but can be weakened, if different perturbation results are used; see~\cite[Chapter VIII. 1]{bhatia2013matrix}. Assumption A2 ensures that the least squares problem $\min_{\B{A}}\| \B{\Upsilon}_f\B{A} - \B{\Upsilon}_l\|_F$  
has a unique solution. Assumption A3 is also made in~\cite{kung1978new}, and is necessary to ensure the stability of the system. Finally, Assumption A4 ensures that the approximate singular vectors are sufficiently accurate. 

\subsection{Main result} We are ready to state our main theorem. 
\begin{theorem}\label{thm:error} With the notation in Section~\ref{ssec:assump} and \cref{ass:main} 
\[ \sv(\psi(\Ah_r), \psi(\B{A}_r)) \leq \kappa_2(\B{W}) \eta\left( 1 +\frac{ \sqrt{2}\|\B{\Upsilon}_f^\dagger\|_2}{1-\eta} \right),  \]
where $\kappa_2(\B{W}) = \|\B{W}\|_2\|\B{W}^{-1}\|_2$ is the condition number of the eigenvectors $\B{W}$ of $\mat{A}_r$.
\end{theorem}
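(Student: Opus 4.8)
The plan is to reduce the spectral-variation bound to an ordinary norm bound on $\mathat{A}_r - \B{A}_r$, and then control that norm with the pseudoinverse perturbation facts \eqref{eqn:mppert}--\eqref{eqn:abdagger}. Since $\B{A}_r$ is diagonalizable by Assumption~A1 with eigenvector matrix $\B{W}$, a Bauer--Fike-type spectral-variation estimate for diagonalizable matrices (see \cite[Chapter VIII.1]{bhatia2013matrix}) gives
\[ \sv(\psi(\mathat{A}_r), \psi(\B{A}_r)) \le \kappa_2(\B{W})\,\|\mathat{A}_r - \B{A}_r\|_2 . \]
So the whole problem reduces to showing $\|\mathat{A}_r - \B{A}_r\|_2 = \|\mathat{\Upsilon}_f^\dagger\mathat{\Upsilon}_l - \B{\Upsilon}_f^\dagger\B{\Upsilon}_l\|_2 \le \eta\bigl(1 + \sqrt{2}\|\B{\Upsilon}_f^\dagger\|_2/(1-\eta)\bigr)$.

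Before comparing the two matrices I would resolve a subtlety: $\mathat{A}_r$ and $\B{A}_r$ are each defined only up to similarity, so a raw norm difference is meaningful only once the bases $\mathat{U}_r$ and $\B{U}_r$ are made compatible. The key observation is that replacing $\mathat{U}_r$ by $\mathat{U}_r\B{Z}$ for an orthogonal $\B{Z} \in \mb{R}^{r\times r}$ leaves the spectrum of $\mathat{A}_r$ fixed: provided $\rank(\mathat{\Upsilon}_f) = r$, the product rule \eqref{eqn:abdagger} gives $(\mathat{\Upsilon}_f\B{Z})^\dagger(\mathat{\Upsilon}_l\B{Z}) = \B{Z}^\top\mathat{A}_r\B{Z}$, an orthogonal similarity. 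I may therefore align the bases: with the SVD $\B{U}_r^\top\mathat{U}_r = \B{P}(\cos\B\Theta)\B{Q}^\top$, choosing $\B{Z} = \B{Q}\B{P}^\top$ makes $\B{U}_r^\top\mathat{U}_r\B{Z} = \B{P}(\cos\B\Theta)\B{P}^\top$ symmetric positive semidefinite, and then
\[ (\mathat{U}_r\B{Z} - \B{U}_r)^\top(\mathat{U}_r\B{Z} - \B{U}_r) = 2\bigl(\B{I} - \B{U}_r^\top\mathat{U}_r\B{Z}\bigr), \]
whose norm is $2(1-\cos\theta_{\max})$, so $\|\mathat{U}_r\B{Z} - \B{U}_r\|_2 = 2\sin(\theta_{\max}/2) \le 2\sin\theta_{\max}$. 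Writing $\B{E}_f = \mathat{\Upsilon}_f - \B{\Upsilon}_f$ and $\B{E}_l = \mathat{\Upsilon}_l - \B{\Upsilon}_l$ for this aligned basis (these are row sub-blocks of $\mathat{U}_r\B{Z} - \B{U}_r$), it follows that $\|\B{E}_f\|_2, \|\B{E}_l\|_2 \le 2\sin\theta_{\max}$, hence $\|\B{\Upsilon}_f^\dagger\|_2\|\B{E}_f\|_2 \le \eta$ by the definition \eqref{eqn:eta}. Since $\eta < 1$ (Assumption~A4), the perturbation $\B{E}_f$ keeps $\mathat{\Upsilon}_f$ at full rank $r$, which retroactively justifies the use of \eqref{eqn:abdagger} above.

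With the alignment in hand I would split
\[ \mathat{\Upsilon}_f^\dagger\mathat{\Upsilon}_l - \B{\Upsilon}_f^\dagger\B{\Upsilon}_l = (\mathat{\Upsilon}_f^\dagger - \B{\Upsilon}_f^\dagger)\mathat{\Upsilon}_l + \B{\Upsilon}_f^\dagger(\mathat{\Upsilon}_l - \B{\Upsilon}_l) \]
and bound the two pieces. The second is immediate: $\|\B{\Upsilon}_f^\dagger\B{E}_l\|_2 \le \|\B{\Upsilon}_f^\dagger\|_2\|\B{E}_l\|_2 \le \eta$. For the first, $\mathat{\Upsilon}_l$ is a sub-block of an orthonormal matrix so $\|\mathat{\Upsilon}_l\|_2 \le 1$, while \eqref{eqn:mppert} gives $\|\mathat{\Upsilon}_f^\dagger - \B{\Upsilon}_f^\dagger\|_2 \le \sqrt{2}\|\B{\Upsilon}_f^\dagger\|_2\|\mathat{\Upsilon}_f^\dagger\|_2\|\B{E}_f\|_2$; bounding $\|\mathat{\Upsilon}_f^\dagger\|_2 \le \|\B{\Upsilon}_f^\dagger\|_2/(1-\eta)$ via \eqref{eqn:mpinv} (its hypothesis $\|\B{E}_f\|_2\|\B{\Upsilon}_f^\dagger\|_2 \le \eta < 1$ is exactly A4) and regrouping $\|\B{\Upsilon}_f^\dagger\|_2\|\B{E}_f\|_2 \le \eta$ turns this piece into $\sqrt{2}\|\B{\Upsilon}_f^\dagger\|_2\,\eta/(1-\eta)$. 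Adding the two contributions gives $\|\mathat{A}_r - \B{A}_r\|_2 \le \eta\bigl(1 + \sqrt{2}\|\B{\Upsilon}_f^\dagger\|_2/(1-\eta)\bigr)$, and combining with the Bauer--Fike step yields the theorem.

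I expect the main obstacle to be the alignment/invariance step rather than the algebra: one must recognize that the norm comparison is legitimate only after rotating $\mathat{U}_r$ by an orthogonal $\B{Z}$, verify that this rotation acts as a similarity on $\mathat{A}_r$ (so it is harmless for the spectrum), and then extract the $2\sin\theta_{\max}$ factor from the canonical-angle geometry so that $\|\B{E}_f\|_2$ and $\|\B{E}_l\|_2$ feed correctly into the definition of $\eta$. Once the perturbation sizes are pinned down, the triangle-inequality split and the two pseudoinverse bounds are routine.
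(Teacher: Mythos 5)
Your proposal is correct and follows essentially the same route as the paper's proof: a Bauer--Fike-type perturbation bound for diagonalizable matrices, alignment of the two singular-vector bases via the Procrustes-type rotation $\B{Z}$ built from the SVD of $\B{U}_r^\top\Uh_r$, a Weyl-theorem argument to secure $\rank(\mathat{\Upsilon}_f)=r$, and the same triangle-inequality split controlled by \eqref{eqn:mppert} and \eqref{eqn:mpinv}. The only cosmetic difference is that you bound $\|\Uh_r\B{Z}-\B{U}_r\|_2$ by $2\sin(\theta_{\max}/2)$ through the Gram-matrix identity (marginally tighter than the paper's $2\|\sin\B\Theta\|_2$ obtained by triangle inequality), which does not change the argument or the final bound.
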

The theorem identifies several factors that can cause large errors during the identification step. First,  the eigenvectors of $\B{W}$ may be ill-conditioned, i.e., $\kappa_2(\B{W})$ can be large. The best case scenario is when $\B{A}$ is normal, so that the condition number is $1$.  Second, the norm of the pseudoinverse $ \|\B{\Upsilon}_f^\dagger\|_2 \geq 1$ can be large. However, Assumption A3 ensures that $\lim_{s\rightarrow \infty} \|\B\Upsilon_f\|_2 =1$; see~\cite[Lemma 5]{kramer2018system} for a detailed argument.  Finally, if the singular vectors of the low-rank approximation are not computed accurately, then the canonical angles can be large, i.e., $\sin\theta_{\max}$ can be close to $1$. This can be mitigated, for example, by taking several subspace iterations. The first three assumptions are intrinsic to the system $(\B{A},\B{B},\B{C},\B{D})$ and only Assumption A4 depends on the choice of the numerical method. The theorem is applicable to the randomized algorithms developed here, namely, RandERA and RandTERA. However, it is important in a larger context, since it is applicable to any approximation algorithm for ERA, including TERA.

\begin{proof}[Proof of Theorem~\ref{thm:error}]
There are several steps involved in this proof. 

\textbf{1. Introducing a similarity transformation.}  Let $\B{Z}\in \mb{R}^{r\times r}$ be an orthogonal matrix; we will leave the specific choice of this matrix to step 2. Since $\B{A}_r$ is diagonalizable, using a perturbation theorem for diagonalizable matrices~\cite[Theorem VIII.3.1]{bhatia2013matrix}, we have
\begin{equation}\label{eqn:perturb1} \sv(\psi(\Ah_r), \psi (\B{A}_r)) = \sv(\psi(\B{Z}\Ah_r\B{Z}^\top), \psi(\B{A})) \leq \kappa_2(\B{W}) \|\B{A}_r - \B{Z}\Ah_r\B{Z}^\top\|_2. \end{equation}
 The equality in the above equation is because the eigenvalues of $\Ah_r$ and $\B{Z}\Ah_r\B{Z}^\top$ are the same. We first discuss the choice of $\B{Z}$ before analyzing the error in the term $\|\B{A}_r - \B{Z}\Ah_r\B{Z}^\top\|_2$.

\textbf{2. Choosing $\B{Z}$.} Recall that the SVD of $\B{U}_r^\top\Uh_r$ is $\B{P}(\cos\B\Theta)\B{Q}^\top$. Using the unitary invariance of the 2-norm, $\|\B{U}_r-\Uh_r\B{Z}^\top \|_2 = \| \B{U}_r\B{Z} - \Uh_r \|_2$. Now, we choose $\B{Z} = \B{PQ}^\top$ and verify that $\B{Z}$ is orthogonal. By the triangle inequality and the unitary invariance of the 2-norm,
\begin{equation}\label{eqn:umuz}\begin{aligned}\| \B{U}_r\B{Z} - \Uh_r \|_2 \leq & \> \|\B{U}_r\B{Z} - \B{U}_r\B{U}_r^\top\Uh_r \|_2 + \|(\B{I}-\B{U}_r\B{U}_r^\top)\Uh_r \|_2 \\
\leq & \> \|\B{U}_r\B{Z} - \B{U}_r\B{P}(\cos\B\Theta)\B{Q}^\top \|_2 + \|(\B{I}-\B{U}_r\B{U}_r^\top)\proj{\Uh_r}\Uh_r\|_2\\
\leq & \> \|\B{PQ}^\top - \B{P}(\cos\B\Theta)\B{Q}^\top \|_2 + \|(\B{I}-\proj{\B{U}_r})\proj{\Uh_r}\|_2 \|\Uh_r\|_2\\
\leq & \>\|\B{I} - \cos\B\Theta \|_2 + \|\sin\B\Theta \|_2.
\end{aligned}\end{equation}
Since the canonical angles satisfy $0 \leq \theta_i \leq \pi/2$, we have $\cos^2\theta_i \leq \cos\theta_i$ and, therefore, for $i=1,\dots,r$
\[ (1-\cos\theta_i)^2 = 1 -2\cos\theta_i + \cos^2\theta_i \leq 1-\cos^2\theta_i = \sin^2\theta_i.\]
Therefore, $\|\B{I} - \cos\B\Theta \|_2 \leq \|\sin\B\Theta\|_2$. Together with \eqref{eqn:umuz}, we have 
\begin{equation} \label{eqn:perturb2} \|\B{U}_r-\Uh_r\B{Z}^\top \|_2=\| \B{U}_r\B{Z} - \Uh_r \|_2 \leq 2 \|\sin\B\Theta\|_2.\end{equation}

\textbf{3. Ensuring  $\rank(\mathat{\Upsilon}_f)=r$.} 
Choose $\B{M} = \B{\Upsilon}_f$, which has rank $r$, and $\B{M}+\B{E} = \mathat{\Upsilon}_f\B{Z}^\top$. We now show that $\B{M}+\B{E}$ also has rank $r$. 
Using Weyl's theorem on perturbation of singular values~\cite[Theorem 2.2.8]{bjorck2015numerical},
\[ \begin{aligned}|\sigma_r(\mathat{\Upsilon}_f\B{Z}^\top) - \sigma_r(\B\Upsilon_f)| \leq & \> \|\B{E}\|_2 = \|\B{\Upsilon}_f-\mathat{\Upsilon}_f\B{Z}^\top\|_2 \leq  \|\B{U}_r-\Uh_r\B{Z}^\top\|_2 \leq 2 \|\sin\B\Theta\|_2 .
\end{aligned} \]
We have used the fact that $\B{\Upsilon}_f$ and $\mathat{\Upsilon}_f\B{Z}^\top$ are submatrices of $\B{U}_r$ and $\Uh_r\B{Z}^\top$, respectively. Since $\sigma_r(\B\Upsilon_f) = 1/\|\B\Upsilon_f^\dagger\|_2$, we can rearrange to get
\[ \sigma_r(\mathat{\Upsilon}_f\B{Z}^\top) \geq  \frac{1}{\|\B\Upsilon_f^\dagger\|_2} - 2\|\sin\B\Theta\|_2  > 0\] 
since by assumption, $\eta = 2 \|\sin\B\Theta\|_2\|\B{\Upsilon}_f^\dagger\|_2 < 1$. This shows that both $\mathat{\Upsilon}_f$ and $\mathat{\Upsilon}_f\B{Z}^\top$ have rank $r$. Therefore, by~\eqref{eqn:abdagger}, $(\mathat{\Upsilon}_f\B{Z}^\top)^\dagger = \B{Z}\mathat{\Upsilon}_f^\dagger$. Furthermore,~\eqref{eqn:mpinv} applies and
\begin{equation}\label{eqn:perturb3}
    \|\B{Z}\mathat{\Upsilon}_f^\dagger\|_2 \leq \frac{\|\B{\Upsilon}_f^\dagger\|_2}{1-2 \|\sin\B\Theta\|_2\|\B{\Upsilon}_f^\dagger\|_2}. 
\end{equation}

\textbf{4. Error in $\|\B{A}_r - \B{Z}\Ah_r\B{Z}^\top\|_2$.} Write both matrices in terms of their factors, and add and subtract the term $\B{\Upsilon}_f^\dagger \mathat{\Upsilon}_l\B{Z}^\top$ to get
\begin{equation}\label{eqn:perturb}
\begin{aligned}
\| \B{A}_r -  \B{Z}\Ah_r\B{Z}^\top\|_2 &= \| \B{\Upsilon}^\dagger_f\B{\Upsilon}_l - \B{\Upsilon}_f^\dagger\mathat{\Upsilon}_l\B{Z}^\top + \B{\Upsilon}_f^\dagger\mathat{\Upsilon}_l\B{Z}^\top -\B{Z}\mathat{\Upsilon}_f^\dagger\mathat{\Upsilon}_l\B{Z}^\top \|_2 \\
&\leq \| \B{\Upsilon}^\dagger_f \|_2\|\B{\Upsilon}_l-\mathat{\Upsilon}_l\B{Z}^\top \|_2 + \|\B{\Upsilon}^\dagger - \B{Z}\mathat{\Upsilon}_f^\dagger \|_2\| \mathat{\Upsilon}_l\B{Z}^\top \|_2.
\end{aligned}
\end{equation}
Note that, by Assumption A2, $\rank(\B{\Upsilon}_f) = r$. In step 3, we showed that $\rank(\mathat{\Upsilon}_f\B{Z}^\top) = r$.  
As before, with $\B{M} = \B{\Upsilon}_f$ and $\B{M}+\B{E} = \mathat{\Upsilon}_f\B{Z}^\top$,~\eqref{eqn:mppert} applies and
$$\|\B{\Upsilon}_f^\dagger - \B{Z}\mathat{\Upsilon}_f^\dagger \|_2 \leq  \sqrt{2} \|\B{\Upsilon}_f^\dagger \|_2 \|\B{Z}\mathat{\Upsilon}_f^\dagger\|_2 \| \B{\Upsilon}_f - \mathat{\Upsilon}_f\B{Z}^\top \|_2.$$
Plugging this into~\eqref{eqn:perturb}, we have
\[\| \B{A}_r -  \B{Z}\Ah_r\B{Z}^\top\|_2 \leq   \|\B{\Upsilon}_f^\dagger \|_2 \left(\|\B{\Upsilon}_l-\mathat{\Upsilon}_l\B{Z}^\top \|_2  +  \sqrt{2}\|\B{Z}\mathat{\Upsilon}_f^\dagger\|_2 \| \B{\Upsilon}_f - \mathat{\Upsilon}_f\B{Z}^\top \|_2\right).\]
We have used the fact that $\| \mathat{\Upsilon}_l\B{Z}^\top \|_2 \leq \|\Uh_r\B{Z}^\top\|_2 = 1$; the inequality is because $\mathat{\Upsilon}_l\B{Z}^\top$ is a submatrix of $\Uh_r\B{Z}^\top$ and the equality follows since $\Uh_r\B{Z}^\top$ has orthonormal columns. Similarly,
\[ \|\B{\Upsilon}_l-\mathat{\Upsilon}_l\B{Z}^\top\|_2 \leq \|\B{U}_r-\Uh_r\B{Z}^\top\|_2, \qquad \|\B{\Upsilon}_f-\mathat{\Upsilon}_f\B{Z}^\top\|_2 \leq \|\B{U}_r-\Uh_r\B{Z}^\top\|_2,\]
so that 
\begin{equation}\label{eqn:perturb4}\| \B{A}_r -  \B{Z}\Ah_r\B{Z}^\top\|_2 \leq   \|\B{\Upsilon}_f^\dagger \|_2 \|\B{U}_r-\Uh_r\B{Z}^\top \|_2\left( 1  +  \sqrt{2}\|\B{Z}\mathat{\Upsilon}_f^\dagger\|_2 \| \right).\end{equation}

\textbf{5. Finishing the proof.} Plug~\eqref{eqn:perturb2} and~\eqref{eqn:perturb3} into~\eqref{eqn:perturb4} to obtain
\[ \| \B{A}_r -  \B{Z}\Ah_r\B{Z}^\top\|_2  \leq  (2\|\sin\B\Theta\|_2 \|\B{\Upsilon}_f^\dagger\|_2)\left(1 +  \frac{\sqrt{2}\|\B{\Upsilon}_f^\dagger\|_2}{1-2 \|\sin\B\Theta\|_2\|\B{\Upsilon}_f^\dagger\|_2} \right).  \]
Plug this bound into~\eqref{eqn:perturb1} to finish the proof. 
\end{proof}

An important aspect of the proof of Theorem~\ref{thm:error} is the choice of the orthogonal matrix $\B{Z}$ that determines the similarity transformation. Our proof used this idea from~\cite[Theorem 5]{hunter2010performance}. It is worth pointing out that the matrix $\B{Z} = \B{PQ}^\top$ is also the solution to the orthogonal Procrustes problem 
\[ \min_{\substack{\B{Z} \in \mb{R}^{r\times r} \\\B{Z}^\top\B{Z} = \B{I}_r }} \|\B{U}_r\B{Z} - \Uh_r\|_F.\]
Therefore, the matrix $\B{Z} = \B{PQ}^\top$ is the matrix that ``best rotates'' the columns $\B{U}_r$ to align with the columns of $\Uh_r$. Note, however, that in the proof we use the 2-norm instead of the Frobenius norm.

\subsection{Accuracy of the singular vectors} Theorem~\ref{thm:error} shows that the accuracy of the approximation algorithms to the ERA depend on the canonical angles between the exact and approximate subspaces $\range(\B{U}_r)$ and $\range(\Uh_r)$. Insight into the accuracy between these subspaces can be obtained by standard perturbation theory. Suppose there are numbers $\alpha \geq 0$ and $\delta >0$ such that 
\[ \sigma_{r+1}(\Hh_s) \geq \alpha + \delta, \qquad \sigma_r(\mat{H}_s) \leq \alpha. \]
Let $\Uh_r,\Vh_r$ be the matrices containing the left and right singular vectors corresponding to the first $r$ singular values of $\Hh_s$, which are taken to be the diagonals of the matrix $\Sh_r$. Then, by~\cite[Chapter V. 4, Theorem 4.4]{stewart1990matrix},
\[  \|\sin\B\Theta\|_2 \leq \frac{\max\{ \|\matc{H}_s\Vh_r- \Uh_r\Sh_r \|_2, \|\matc{H}_s^\top \Uh_r- \Vh_r \Sh_r\|_2\}}{\delta}.\]
In Section~\ref{sec:randera}, the randomized subspace iteration is used to construct the low-rank approximation $\Hh_s$. We can use more special purpose error bounds to quantify the accuracy of the singular vectors. Specifically, one may use techniques developed in Section 3.2 of~\cite{saibaba2019randomized}. We omit a detailed statement of the results.

\subsection{Stability}
The discrete dynamical system with the system matrix $\B{A}_r$ is stable if the eigenvalues of $\B{A}_r$ lie inside the unit circle; that is, the spectral radius $\rho(\B{A}_r) < 1$. Assumption A3 in Section~\ref{ssec:assump} means that the Markov parameters decay after some finite time. Then, by~\cite[Theorem 3]{kramer2018system}, there exists a positive integer $s$ such that the identified reduced order model is a stable discrete dynamical system. Theorem~\ref{thm:error} can also be used to determine when the approximate discrete dynamical system with the system matrix $\Ah_r$ is stable. 

We consider the spectral radius of $\Ah_r$, and write
\[ \rho(\Ah_r) = \max_{1 \leq j \leq n}|\lambda_j(\Ah_r)| = |\lambda_{j^*}(\Ah_r)|, \]
for some index $j^*$. Similarly, let $i^*$ be the index such that $\lambda_{i^*}(\B{A}_r)$ is the closest eigenvalue to $\lambda_{j^*}(\Ah_r)$. If $i^*$ and $j^*$ are not unique, break the tie in some fashion. Therefore, we have the series of inequalities
\[ \rho(\Ah_r) \leq |\lambda_{j^*}(\Ah_r) - \lambda_{i^*}(\B{A}_r) | + | \lambda_{i^*}(\B{A}_r)| \leq  \sv(\Psi(\Ah_r),\Psi(\B{A}_r)) + \rho(\B{A}_r). \]
By Theorem~\ref{thm:error}, the approximate discrete dynamical system is stable if 
\[  \kappa_2(\B{W}) \eta\left( 1 +\frac{ \sqrt{2}\|\B{\Upsilon}_f^\dagger\|_2}{1-\eta} \right) < 1 - \rho(\B{A}_r).\]
By assumption, $\eta < 1$; the above equation gives an additional condition on $\eta$ for stability. This condition can be informative for the numerical method used to compute the singular vectors $\Uh_r$. 

\section{Numerical Results}\label{sec:num}

To test our algorithms, we consider two different test problems which pose distinct challenges. The first test problem is generated from a LTI model of a controlled heat transfer process for optimal cooling of a steel profile \cite{morwiki_steel}. The second test problem is generated from a LTI state-space model of an electrical power generation system with 50 generators \cite{BenS05b}. The dimensions of the variables related to both test problems are shown in Table~\ref{tab:datasets}. The heat transfer test problem has a very large system size $n$ but a small number of inputs and outputs, while the power system test problem has a relatively small system size but a much larger number of inputs and outputs.  
All our experiments were run in \textsc{MATLAB}, and our timing experiments were run on the NCSU HPC Cluster with 72GB memory.

\begin{table}[!ht]
    \centering
    \begin{tabular}{c|c|c|c}
        System & System Size {$n$}  & Inputs {$m$} & Outputs {$\ell$}  \\
        \hline
         Heat Transfer & 1357 & 7 & 6 \\
         Power System & 155 & 50 & 155
    \end{tabular}
    \caption{Details of Test Problems}
    \label{tab:datasets}
\end{table}

\subsection{Heat Transfer}
We first test our algorithms on the heat transfer test problem, which has a large system size but a relatively small number of inputs and outputs. This system also has slow dynamics, so a large value of $s$ is needed to capture the transient behavior. The model for this test problem is not in standard form, so we will need to convert it to standard form before we are able to use our algorithms. The original continuous form after spatial discretization is
\begin{equation*}
    \begin{aligned}
        \mat{E} \frac{d\mat{x}}{d t} &= \mat{A}_o\mat{x} + \mat{B}_o\mat{u} \\
        \mat{y} &= \mat{C}_o\mat{x} + \mat{D}_o \mat{u}.
    \end{aligned}
\end{equation*}

To convert it to standard form, we use the Cholesky factorization of $\mat{E} = \mat{LL}^\top$. Then, the continuous standard form system matrices are found by computing $\mat{A}_c = \mat{L}^{-1}\mat{A}_o\mat{L}^{-\top}$, $\mat{B}_c = \mat{L}^{-1}\mat{B}_o$, and $\mat{C}_c = \mat{C}_o\mat{L}^{-\top}$. Note that $\mat{D}_c = \mat{D}_o$ remains unchanged. The continuous-time matrices are converted into appropriate discrete-time matrices using \textsc{MATLAB}'s \verb|c2d| command. It is worth noting that the continuous-time matrices are only used in the construction of the Markov parameters but are not used explicitly in the system identification algorithms.

For this heat transfer problem, we will compare accuracy and speed of our algorithms that take advantage of the block Hankel structure of $\matc{H}_s$, i.e., RandSVD-H (see Section~\ref{sec:randera}), to the standard ERA that uses a full SVD. We also contrast these two algorithms with another algorithm we call SVDS-H, which uses the \textsc{MATLAB} command \verb|svds|, but we instead use the block Hankel multiplication described in Algorithm~\ref{alg:blockmult}.

\subsubsection{Accuracy} We consider several numerical experiments to test the accuracy of our algorithms. For each experiment to test accuracy, we used $s = 1000$ and a target rank of $r=20$. That is, we are performing model reduction in addition to the system identification. For RandSVD-H, we used $q=1$ for the number of subspace iterations and $\rho = 20$ for the oversampling parameter.  In our first experiment, we compare the first 20 singular values of the block Hankel matrix $\matc{H}_s \in \mb{R}^{6000 \times 7000}$ as computed by a full SVD to the first $20$ singular values computed by the SVDS-H and RandSVD-H algorithms.  The singular values are plotted in Figure~\ref{fig:svalues_steel1357}; we see that they are in good agreement with each other.

\begin{figure}[!ht]
    \centering
    \includegraphics[scale=.4]{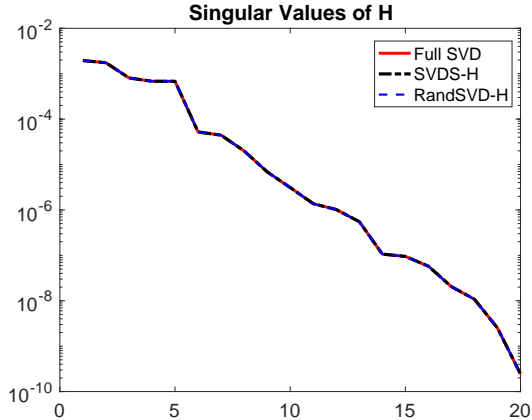}
    \caption{Singular values of the block Hankel matrix $\matc{H}_s$ as computed by a full SVD, SVDS-H, and RandSVD-H. We computed the first $20$ singular values using SVDS-H and RandSVD-H and plotted them against the first 20 singular values computed by the full SVD.}
    \label{fig:svalues_steel1357}
\end{figure}

Next we compare the eigenvalues of the identified $\mathat{A}_r$ matrices using the SVDS-H and RandSVD-H algorithms to the eigenvalues of $\mat{A}_r$ identified using the full SVD ERA algorithm.  The results are shown in Figure~\ref{fig:evalues_steel1357}, where we can see that the eigenvalues of $\mathat{A}_r$ computed using SVDS-H and RandSVD-H match those of $\mat{A}_r$ computed using the full SVD  accurately. 

\begin{figure}[!ht]
    \centering
    \includegraphics[width=\textwidth]{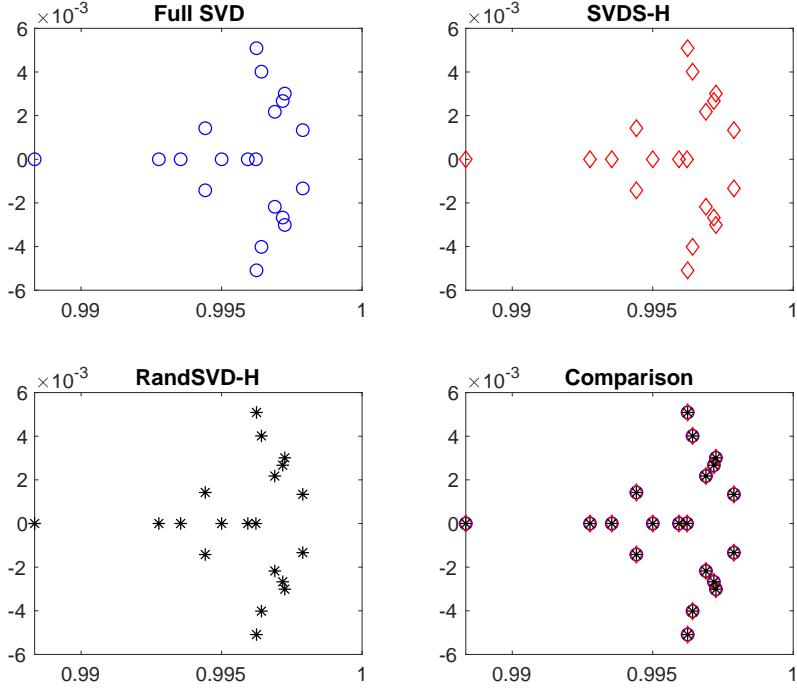}
    \caption{Eigenvalues of the identified $\mat{A}_r$ matrix using SVD, SVDS-H, and RandSVD-H algorithms on the heat transfer problem. We used $s = 1000$ and a target system size of $r = 20$ as inputs for each algorithm.}
    \label{fig:evalues_steel1357}
\end{figure}

In order to quantitatively evaluate the accuracy of the identified eigenvalues, we consider the Hausdorff distance metric, defined between the spectra of the two matrices $\mat{A}_r$ and $\mathat{A}_r$, 

\begin{equation}\label{eqn:hausdist}
    h_d(\psi(\mat{A}_r),\psi(\mathat{A}_r)) = \max \left\{\sv \left(\psi(\mat{A}_r),\psi(\mathat{A}_r)\right), \sv \left(\psi(\mathat{A}_r),\psi(\mat{A}_r)\right) \right\}.
\end{equation}
Note that the Hausdorff distance is related to the spectral variation, defined in \eqref{eqn:spvar}, as it measures how close two sets are to each other. We use $h_d$ instead of spectral variation as the spectral variation of two sets depends on the order of the sets in question, while the Hausdorff distance does not.
For both algorithms SVDS-H and RandSVD-H, we computed the Hausdorff distance between the spectra of $\mathat{A}_r$ and $\mat{A}_r$ and $\mathat{A}_r$ and we saw that $h_d \approx  10^{-14}$. 

The final measure of accuracy we will consider for this test problem is the relative error in the Markov parameters. Let $\mat{A}_r,\mat{B}_r,\mat{C}_r$ be the system matrices identified by the full SVD, and let $\mathat{A}_r,\mathat{B}_r,\mathat{C}_r$ be the system matrices identified using another method. The relative error in the Markov parameters is then defined as 
\begin{equation}\label{eqn:markoverr}
    M_k = \frac{\|\mat{C}_r\mat{A}_r^{k}\mat{B}_r - \mathat{C}_r\mathat{A}_r^k \mathat{B}_r\|_2}{\|\mat{C}_r\mat{A}_r^k\mat{B}_r \|_2}, \quad k = 1,\dots,2s-1.
\end{equation}
 We compute this error for the identified matrices using the SVDS-H and RandSVD-H algorithms, and plot the results in Figure~\ref{fig:markoverr_steel1357}. Both algorithms produce comparable error in the Markov parameters, and this error is very low in both cases. These results combined with those from Figure~\ref{fig:evalues_steel1357} show that our algorithms are very accurate compared to the standard algorithms.

\begin{figure}[!ht]
    \centering
    \includegraphics[scale=.4]{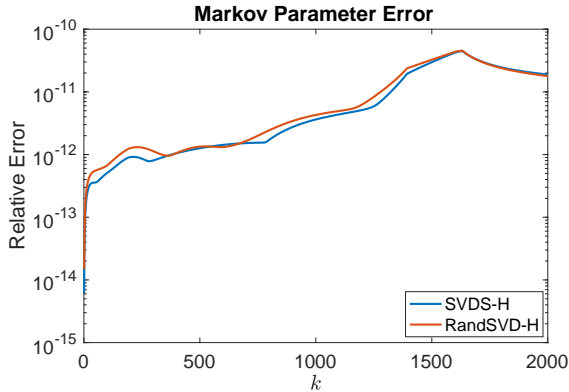}
    \caption{Relative error in recreated Markov parameters compared to a full SVD ERA for both the SVDS-H and RandSVD-H algorithms. We used $s = 1000$ and a target system size of $r = 20$ as inputs.}
    \label{fig:markoverr_steel1357}
\end{figure}

\subsubsection{Timing} We now consider the average run time of the three algorithms on the heat transfer problem. The results, shown in Figure~\ref{fig:time_steel1357}, clearly indicate that using the block Hankel structure as in SVDS-H and RandSVD-H significantly reduces the computational cost, and using a randomized algorithm reduces that cost further, as RandSVD-H is computationally the least expensive of the three. We also see from the plots that the SVDS and RandSVD-H algorithms, which both exploit the block Hankel structure, show approximately linear scaling with $s$ confirming the analysis of the computational costs.

\begin{figure}[!ht]
    \centering
    \includegraphics[scale=.4]{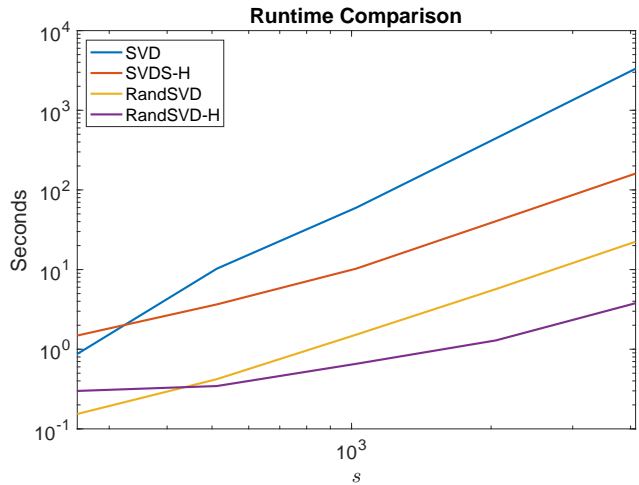}
    \caption{Average run time of the SVD, SVDS, RandSVD, and RandSVD-H algorithms on the heat transfer problem. We averaged the run time over three runs, and a target system size of $r = 20$ as inputs for each algorithm.}
    \label{fig:time_steel1357}
\end{figure}

\subsubsection{Comparison to CUR-ERA} The final test we consider with this application is how CUR-ERA compares to the randomized algorithms we analyzed in the previous experiments. For CUR-ERA, we used the default parameters suggested in the code provided by the authors of \cite{kramer2018system}. The maxvol tolerance is 0.002, and the iterations stop when the relative error of the subsequent iterations is less than $10^{-4}$. For our experiments, we start with the accuracy of the eigenvalues, plotted in Figure~\ref{fig:cur_evals}. 
\begin{figure}[!ht]
    \centering
    \includegraphics[scale=.4]{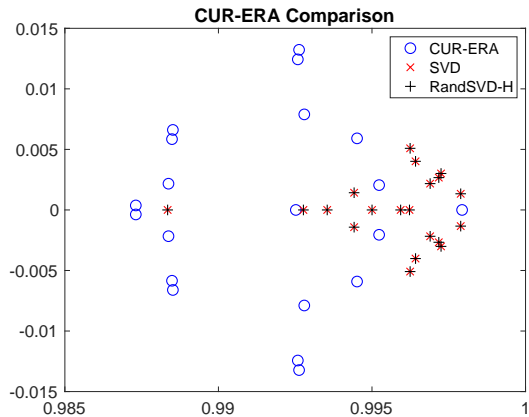}
    \caption{Eigenvalues of the identified $\mat{A}_r$ matrix using CUR-ERA, compared to those identified by SVD and RandSVD-H on the heat transfer problem with $s = 1000$ and a target system size of $r=20$.}
    \label{fig:cur_evals}
\end{figure}
In this figure, we see that the eigenvalues of $\mat{A}$ identified by CUR-ERA are not as accurate as those identified by RandSVD-H. To quantify this, we compute the Hausdorff distance between the spectrum of the $\mat{A}_r$ matrix identified by CUR-ERA and the spectrum of the $\mathat{A}_r$ matrix identified by SVD-ERA when $s = 1000$. We find that this distance is approximately $8.9 \times 10^{-3}$, supporting the claim that CUR-ERA is less accurate than RandSVD-H. At the same time, the CUR-ERA algorithm took $0.83$ seconds compared to $0.66$ seconds for RandSVD-H (averaged over three runs). Finally, we show the relative error in the Markov parameters, computed as defined in \eqref{eqn:markoverr}, for CUR-ERA, RandSVD-H, and SVDS-H. The results are shown in Figure~\ref{fig:cur_merr}, and we see that the error shown here confirms the accuracy comparison, thus far. The accuracy of CUR-ERA  is lower than that obtained from either RandSVD-H or SVDS-H.

\begin{figure}[!ht]
    \centering
    \includegraphics[scale=.35]{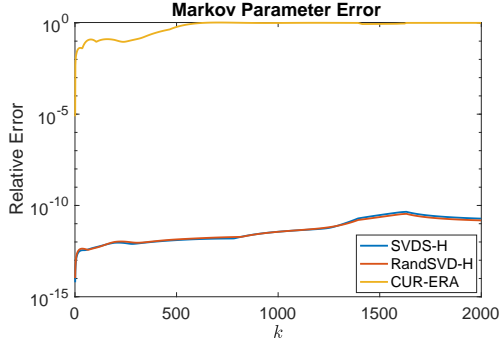}
    \caption{Relative error in recreated Markov parameters compared to a full SVD ERA for RandSVD-H, SVDS-H, and CUR-ERA.}
    \label{fig:cur_merr}
\end{figure}

\subsection{Power system}
For our next application, we consider the system identification of an electric power system model. Identification is an important component of power system modeling as the operating points in a power grid keep changing due to changes in loads, renewable generation profiles, and the network topology, all of which motivate the need for fast algorithms that can quickly update the small-signal model of the grid about these changing operating points from time to time \cite{chakratsg17}. The updated models, in turn, enable operators to make real-time control decisions for ensuring system stability.  For the test problem, we consider a prototype model of reasonably large dimension, namely the IEEE 50-generator model with $145$ buses, $n=155$ state variables, and $m = 50$ input variables, as listed in Table~\ref{tab:datasets}. Details about the physical meaning of the states and model parameters of this test system can be found in \cite{pst}. The original IEEE 50-generator model is nonlinear. The model is, therefore, first linearized about a chosen power flow solution. The resulting LTI model is excited by $50$ impulse functions injected through the {excitation input} terminals of the generators. All $\ell = 155$ states are measured, and are considered as outputs. The output matrix for this example is, therefore, the identity matrix.

An important feature of this problem is that it has a significantly larger number of inputs and outputs with a relatively small system size $n=155$. Since the computational cost of RandSVD-H scales linearly with the product of the number of inputs and outputs, this suggests that RandTERA may have be computationally advantageous if the number of inputs and outputs can be reduced. Therefore, our experiments will compare RandTERA to RandSVD-H both in terms of accuracy and computational costs. For this problem, we again perform system identification as well as model reduction, taking the target rank to be $r = 75$.

\subsubsection{Accuracy} First, we will examine the accuracy of both RandTERA and RandSVD-H on the power system dataset. For all the experiments for testing accuracy, we will use $s=500$, and target rank $r=75$. For the randomized algorithms, we use an oversampling parameter $\rho = 20$, and $q = 1$ subspace iterations. Note that the full Hankel matrix for the accuracy experiments is of size $77500 \times 25000$.

\begin{figure}[!ht]
    \centering
    \begin{subfigure}[b]{.45\textwidth}
    \centering
    \includegraphics[width=\textwidth]{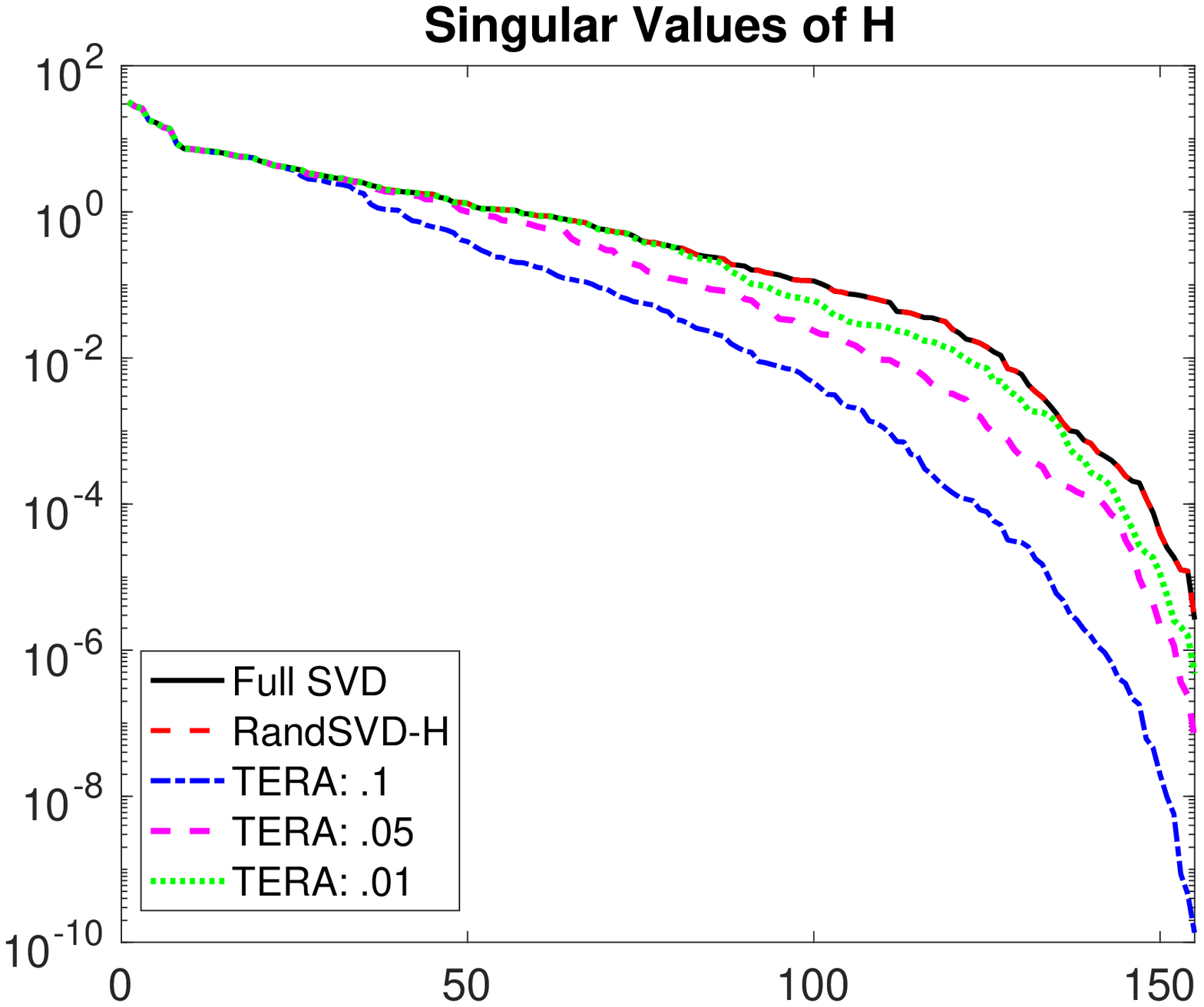}
    \caption{}
    \end{subfigure}
    \begin{subfigure}[b]{.45\textwidth}
    \centering 
    \includegraphics[width=\textwidth]{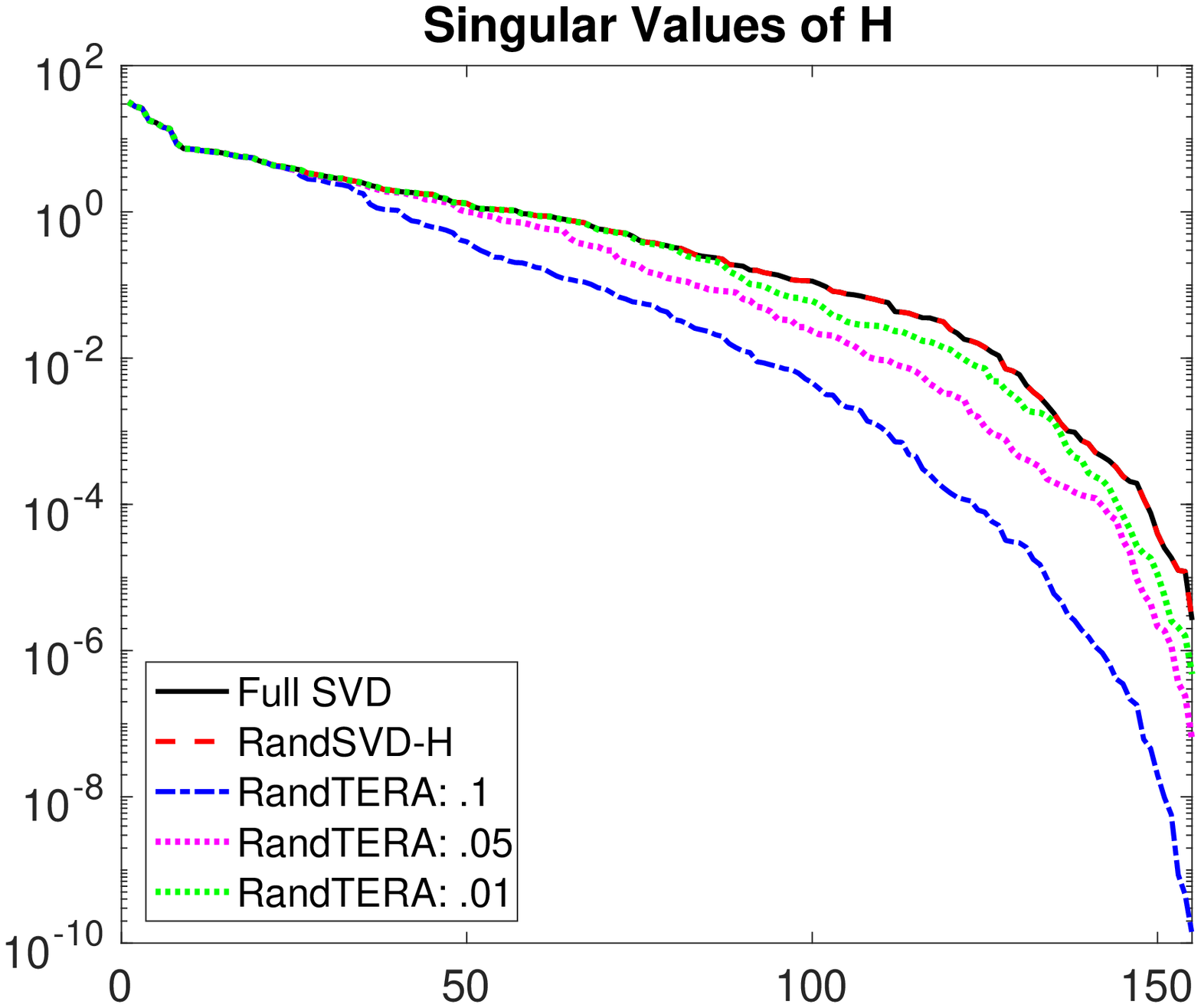}
    \caption{}
    \end{subfigure}
    \caption{Singular values of block Hankel matrix $\matc{H}_s$ as computed by a full SVD and RandSVD-H versus those computed by (a) TERA with varying $\epsilon$ tolerances and (b) RandTERA with varying $\epsilon$ tolerances.}
    \label{fig:svals155_eps}
\end{figure}
\paragraph{Choice of $\ell'$ and $m'$} In our first experiment,  we will investigate the choice of the reduced number of inputs and outputs $\ell'$ and $m'$ for the tangential algorithms (TERA and RandTERA). To choose these parameters, we compute the singular values of $\matc{H}_w$ and $\matc{H}_e$ defined in \eqref{eqn:Hw},\eqref{eqn:He}. The indices $\ell'$ and $m'$ are chosen to be the largest integers such that

\[ \begin{aligned} \ell' = & \arg\min_k \left\{1 \leq  k \leq \ell \left|\sigma_k(\matc{H}_w) \geq \epsilon \sigma_1(\matc{H}_w) \right. \right\}  \\ m' = & \arg\min_k \left\{1 \leq k \leq m \left|{\sigma_k(\matc{H}_e)} \leq \epsilon \sigma_1(\matc{H}_e)\right. \right\}.\end{aligned} \]
That is, they are chosen to be the smallest indices for which the singular values are within a threshold $\epsilon$ of the largest singular value. 
The values of $\ell'$ and $m'$ for different $\epsilon$ thresholds are shown in Table~\ref{tab:randtera_eps}.

\begin{table}[!ht]
    \centering
    \begin{tabular}{c|c|c}
         $\epsilon$ & $m'$ & $\ell'$ \\
         \hline
         $0.1$ & 14 & 18 \\
         $0.05$ & 21 & 33 \\
         $0.01$ & 30 & 59
    \end{tabular}
    \caption{Reduced number of inputs $m'$ and outputs $\ell'$ based on the singular value threshold $\epsilon$ for use in TERA and RandTERA.}
    \label{tab:randtera_eps}
\end{table}

\paragraph{Accuracy of eigenvalues}We first compare the accuracy of the singular values of the computed block Hankel matrix $\matc{H}_s$. First, we show in Figure~\ref{fig:svals155_eps} the effect of different $\epsilon$ values on the accuracy of singular values computed by TERA and RandTERA. We can see that the smaller the tolerance $\epsilon$, the more accurate the singular values. 
Also, the singular values computed using TERA and RandTERA are  less accurate compared to RandSVD-H, but the singular values computed by RandTERA are comparable with those of TERA. This shows that the randomized algorithms are accurate compared to their deterministic counterparts.

\begin{figure}[!ht]
    \centering
    \includegraphics[width=0.8\textwidth]{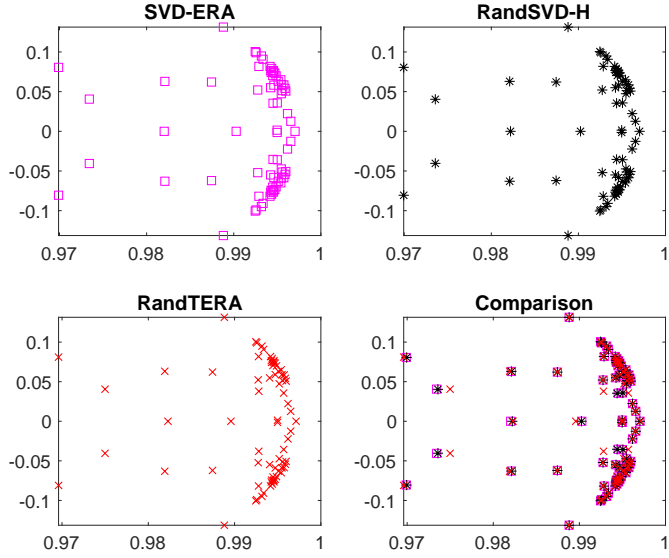}
    \caption{Eigenvalues of the identified  matrix $\mathat{A}_r$ from SVD-ERA (top left panel) compared to the identified  matrix $\mathat{A}_r$ from RandSVD-H (top right) and RandTERA (bottom left panel) on the power system test problem.}
    \label{fig:evalues_155}
\end{figure}

Next, we consider the eigenvalues of the identified $\mathat{A}$ matrix using the RandSVD-H and RandTERA algorithms compared with the eigenvalues of the original $\mat{A}$ matrix. We use the parameters listed above, but we fix $\epsilon = 10^{-2}$. These results are shown in Figure~\ref{fig:evalues_155}. We can see that our algorithms identify the state matrix with eigenvalues very close to the true eigenvalues. To quantify how close they are, we consider the Hausdorff distance again as defined in \eqref{eqn:hausdist}. We computed the Hausdorff distance between the spectrum of the $\mat{A}$ matrix identified by an SVD-ERA and the spectra of the $\mat{A}$ matrices identified by RandSVD-H and RandTERA. We used $s=500$ for all algorithms, and saw that for RandSVD-H, $h_d \approx 2.7 \times 10^{-4}$. For RandTERA, $h_d \approx 3.0 \times 10^{-3}$. 

\begin{figure}[!ht]
    \centering
    \includegraphics[scale=.45]{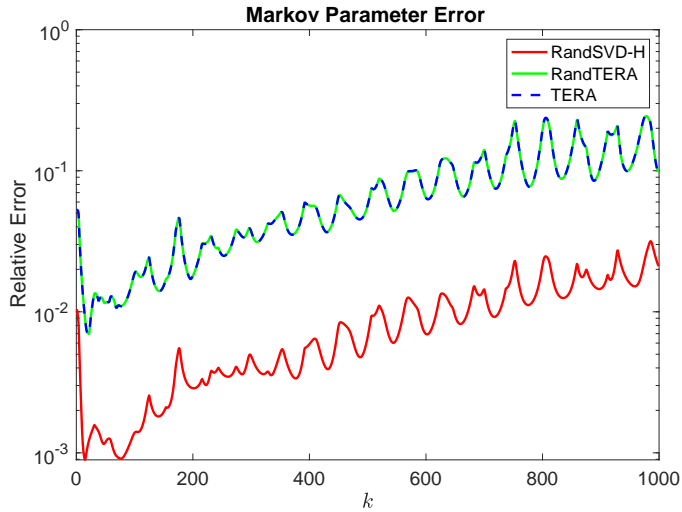}
    \caption{Relative error in the reconstructed Markov parameters using RandTERA, TERA, and RandSVD-H algorithms. The relative error is computed using the system identified using the SVD-ERA.}
    \label{fig:markoverr_155}
\end{figure}
Our final measure of accuracy is the relative error in the Markov parameters as defined in~\eqref{eqn:markoverr}. We plot this relative error  in Figure~\ref{fig:markoverr_155}, and make two observations. The first is that RandSVD-H has a much lower relative error than either TERA or RandTERA. The second is that the error from RandTERA matches the error from TERA very accurately, so the randomization is not causing a significant loss in the accuracy. We can see in Figure \ref{fig:markoverr_155}, that the relative error increases with increasing $k$. This is because the identification techniques using impulse response fail to identify the well damped poles (i.e., the poles whose real part is significantly lesser than 1) accurately, since information corresponding to these modes have to be obtained from the first few Markov parameters only.  In Figure~\ref{fig:timeresp_imp155}, we compare the impulse response of the original full, discrete-time model with that of the models identified using RandSVD-H and RandTERA (an impulse input was fed into Generator 1). The top plot shows the speed response of Generator 1 while the bottom plot shows that for Generator 2. We see that both the identified models have similar performance for Generator 1 and match the original trajectory well, whereas in Generator 2, the trajectories from all  algorithms match each other closely, with only minor deviations from the ``true'' trajectory in the the first $2$ seconds.
 The results for the other generators were observed to follow similar matching trends.

\begin{figure}[!ht]
    \centering
    \includegraphics[width=0.75\textwidth]{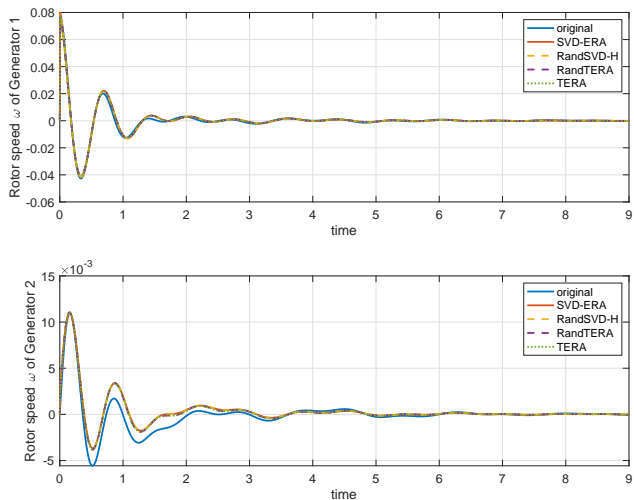}
    \caption{Impulse response of the original discrete-time system compared against that for the systems identified using SVD-ERA, RandSVD-H, TERA, and RandTERA and excited with impulse input at Generator 1, with responses from Generator 1 (top) and Generator 2 (bottom).}
    \label{fig:timeresp_imp155}
\end{figure}

\subsubsection{Timing} 
We next examine the computational cost of RandSVD-H and RandTERA on the power system test problem. We show in Table~\ref{tab:imp155_time} the average CPU time of both of these algorithms compared to a full SVD as $s$ increases. As before, we averaged over three different runs, and used the same parameters for the randomized algorithms, $r=75$, $\rho = 20$ and $q=1$. We see that RandTERA is much cheaper compared to RandSVD-H, and both are much cheaper than a full SVD as expected. Note that for $s = 1000$, the matrix size was $155,000 \times 50,000$ and since this matrix was too large to store explicitly, we do not report the computational cost using the SVD.

\begin{table}[!ht]
    \centering
    \begin{tabular}{c|c|c|c|c|c}
         $s$ & size of $\matc{H}_s$ & SVD & RandSVD-H & TERA & RandTERA  \\
         \hline
         500  & $77,500 \times 25,000$ & 1634.4 & 20.3 & 339.7 & 6.6 \\
         700 & $108,500 \times 35,000$ & 4292.0 & 28.4 & 795.2 & 11.7 \\
         1000 & $155,000 \times 50,000$ & N/A & 41.9 & 1983.9 & 18.0
    \end{tabular}
    \caption{Computational time in seconds of the SVD step in the identification algorithms SVD-ERA, RandSVD-H, TERA, and RandTERA.  Note for $s = 1000$, the matrix was too large to store explicitly, so we only show the run times for the other algorithms.}
    \label{tab:imp155_time}
\end{table}

\section{Conclusions and Future work}
We have presented two different randomized algorithms for accelerating the computational cost of system identification using ERA. The first algorithm accelerates the randomized subspace iteration by efficiently computing matrix vector products using the block Hankel structure. The second algorithm uses the previous algorithm, but on a matrix with a reduced number of inputs and outputs using tangential interpolation. The algorithms no longer have the cubic scaling with $s$, which controls the number of time steps. The error analysis relates the accuracy of the eigenvalues of the  identified system matrix to the accuracy of the singular vectors of the block Hankel matrix $\matc{H}_s$. Numerical experiments from different applications show that our algorithms are both accurate and computationally efficient.

There are several avenues for future work. First, the analysis in Section~\ref{sec:err} is general and does not make assumptions on the specific algorithm used to compute the low-rank approximation to $\matc{H}_s$. The analysis can be specialized by using specific results for the accuracy of the singular vectors, for example, following the approaches in~\cite{nakatsukasa2017accuracy,saibaba2019randomized,simon2000low}. Second, the paper assumes that we have access to the Markov parameters. This is not the case when it is possible to excite the system using general inputs. One possibility, as mentioned earlier, is to use the approach in~\cite{juang1993identification} which may not be feasible when a large number of Markov parameters need to be estimated. Algorithms such as MOESP and N4SID (see~\cite[Chapter 9]{verhaegen2007filtering}) can be used for general inputs but also suffer from high computational costs. However, we can combine the block Hankel structure with randomized algorithms for the general input case as well. Such techniques which involve identification using general input can be extremely accurate in identifying the well-damped poles as they constant excite the system with a persistently exciting signal. This is an ongoing research direction. 

\section{Acknowledgements}
We would also like to thank Eric Hallman for useful comments and for generously sharing his code with us.
\bibliographystyle{siamplain}
\bibliography{ref}

\end{document}